\newcommand{\mbfa}{\mathbf{a}}
\newcommand{\mbfb}{\mathbf{b}}
\newcommand{\mbfc}{\mathbf{c}}
\newcommand{\mbz}{\mathbb{Z}}
\newcommand{\mbn}{\mathbb{N}}
\newcommand{\ol}{\overline}
\newcommand{\proofpart}[2]{
  \par
  \addvspace{\medskipamount}
  \noindent\textbf{Part #1. #2}
  \par\nobreak\smallskip
  \@afterheading
}
\DeclareMathOperator{\conv}{conv}
\DeclareMathOperator{\Tr}{Tr}
\def\centerarc[#1](#2)(#3:#4:#5)
\renewcommand{\email}[2][]{%
  \ifx\emails\@empty\relax\else{\g@addto@macro\emails{,\space}}\fi%
  \@ifnotempty{#1}{\g@addto@macro\emails{\textrm{(#1)}\space}}%
  \g@addto@macro\emails{#2}%
}
\newtheorem{thm}{Theorem}[section]
\newtheorem*{thm*}{Theorem}
\newtheorem{conj}[thm]{Conjecture}
\newtheorem{cor}[thm]{Corollary}
\newtheorem{prop}[thm]{Proposition}
\newtheorem{prob}[thm]{Problem}
\newtheorem{lem}[thm]{Lemma}
\theoremstyle{definition}
\newtheorem{rmk}{Remark}
\newtheorem{exa}[thm]{Example}
\numberwithin{equation}{section}
\newcommand{\newabstract}[1]{%
  \par\bigskip
  \csname otherlanguage*\endcsname{#1}%
  \csname captions#1\endcsname
  \item[\hskip\labelsep\scshape\abstractname.]
}
\tikzset{
        hatch distance/.store in=\hatchdistance,
        hatch distance=5pt,
        hatch thickness/.store in=\hatchthickness,
        hatch thickness=5pt
        }
\pgfqpoint{\hatchdistance}{\hatchdistance}}
\begin{document}

\title[New arithmetic invariants for cospectral graphs]{New arithmetic invariants for cospectral graphs}

\author{Yizhe Ji}
\author{Quanyu Tang}
\author{Wei Wang}
\author{Hao Zhang}
\address[Yizhe Ji, Quanyu Tang, Wei Wang]{School of Mathematics and Statistics, Xi'an Jiaotong University, Xi'an 710049, P. R. China}
\address[Hao Zhang]{School of Mathematics, Hunan University, Changsha 410082, P. R. China}
\email[Yizhe Ji]{jyz67188310@stu.xjtu.edu.cn}
\email[Quanyu Tang]{tang\_quanyu@163.com}
\email[Wei Wang]{wang\_weiw@xjtu.edu.cn}
\email[Hao Zhang (Corresponding author) ]{zhanghaomath@hnu.edu.cn}

\date{}

\begin{abstract}
    An \emph{invariant for cospectral graphs} is a property shared by all cospectral graphs. In this paper, we establish three novel arithmetic invariants for cospectral graphs, revealing deep connections between spectral properties and combinatorial structures. More precisely, one of our main results shows that for any two cospectral graphs $G$ and $H$ with adjacency matrices $A(G)$ and $A(H)$, respectively, the following congruence holds for all integers $m\geq 0$:
    \[e^{\rm T}A(G)^me\equiv e^{\rm T}A(H)^me \pmod{4},\]
where $e$ is the all-one vector. Moreover, we present a number of fascinating applications. Specifically: i) Resolving a conjecture proposed by the third author, we demonstrate that under certain conditions, every graph cospectral with a graph $G$ is determined by its generalized spectrum. ii) We demonstrate that whenever the complements of two trees are cospectral, then one tree has a perfect matching if and only if the other does. An analogous result holds for the existence of triangles in general graphs. iii) An unexpected connection to the polynomial reconstruction problem is also provided, showing that the parity of the constant term of the characteristic polynomial is reconstructible.
\end{abstract}

\subjclass[2020]{Primary 05C50.}

\maketitle

\noindent\textbf{Keywords:} Graph spectra; Cospectral graphs; Determined by generalized spectrum; Graph invariants; Closed walks

\section{Introduction}

The \emph{spectrum} of a graph, consisting of all the eigenvalues (including multiplicity) of its adjacency matrix, is a powerful tool in addressing various, often seemingly unrelated, problems in graph theory. Notable examples include Lov\'{a}sz’s groundbreaking work~\cite{Lov} in determining the Shannon capacity of the pentagon $C_5$ via a novel spectral approach, and Huang’s recent resolution of the Sensitivity Conjecture~\cite{Huang} using the spectrum of signed hypercubes.

Two graphs are \emph{cospectral} if they share the same spectrum. The existence of non-isomorphic cospectral graphs was first demonstrated by Collatz and Sinogowitz~\cite{CS1} in 1957. Another well-known result in this area is Schwenk’s theorem~\cite{Sch}, which states that almost every tree has a cospectral mate. For more constructions of cospectral graphs, see, e.g.~\cite{LS,GM,S}. On the other side of the coin, the question ``Which graphs are determined by their spectrum (DS for short)?" was first raised in 1956 by G\"{u}nthard and Primas~\cite{GH}, relating H\"{u}ckle's theory in chemistry to graph spectra. Proving that a given graph is DS is generally difficult and challenging. To date, only a few graphs with very specific structures are known to be DS. For further background and results, we refer the reader to~\cite{DH1, DH2}.

A \emph{graph invariant} is a property that remains unchanged for all isomorphic graphs. Examples include the number of vertices, edges, the degree sequence, the clique number, and the chromatic number, among others. Given two graphs $G$ and $H$, if one can find a graph invariant for which the two graphs are different, then clearly $G$ and $H$ are non-isomorphic. Therefore, graph invariants are useful in showing that two graphs are non-isomorphic.

In this paper, we investigate graph invariants for cospectral graphs, i.e. properties shared by all cospectral graphs. It is well-known that the number of vertices, edges, triangles, the bipartiteness and the regularity of a graph are invariants for cospectral graphs. However, properties such as the degree sequence, clique number, and chromatic number are not generally preserved by cospectrality.

Identifying invariants for cospectral graphs provides deeper insights into their structural properties. Nevertheless, except for the aforementioned ones, very few invariants for cospectral graphs are known in the literature.  A natural problem is
\begin{prob}\label{Prob1}
Can one give more graph invariants for cospectral graphs?
\end{prob}

The main objective of the paper is to give some graph invariants for cospectal graphs, which, to the best of our knowledge, have not been previously studied.

\subsection{Main results}

Our personal interest in Problem~\ref{Prob1} comes from the generalized spectral characterizations of graphs. Let $G=(V,E)$ be a simple graph with vertex set $V=\{v_1,\dots, v_n\}$ and edge set $E$. The \emph{adjacency matrix} of $G$ is an $n$ by $n$ matrix $A=A(G)=(a_{ij})$, where $a_{ij}=1$ if $v_i$ and $v_j$ are adjacent, and $a_{ij}=0$ otherwise. The \emph{characteristic polynomial} of $G$ is defined as $\phi(G;x)=\det(xI-A(G))$. A graph $G$ is said to be \emph{determined by its generalized spectrum} (DGS for short) if,  whenever $H$ is a graph such that $H$ and $G$ are cospectral with cospectral complements, then $H$ is isomorphic to $G$.

 Let
\[W(G)=[e,A(G)e,\ldots,A(G)^{n-1}e],\]
be the \emph{walk-matrix} of $G$, where $e$ is the all-one vector. Define $\eta(G):=2^{-\lfloor n/2 \rfloor}|\det W(G)|$, which is always an integer according to~\cite{WX1}. The invariant $\eta(G)$ plays an important role in showing a graph $G$ to be DGS. In~\cite{Wang2}, Wang proved the following

\begin{thm}[\cite{Wang2}, Theorem 1.1] If $\eta(G)$ is odd and square-free, then $G$ is DGS.
\end{thm}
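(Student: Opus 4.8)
The plan is to run the now-standard translation of generalized-spectrum equivalence into rational orthogonal similarity, and then to use $\eta(G)$ to force that similarity to be a relabelling of vertices. Suppose $H$ is cospectral with $G$ and has cospectral complement. The first step is to invoke the characterization (Wang--Xu) that this is equivalent to the existence of a \emph{regular rational orthogonal} matrix $Q$, i.e.\ an orthogonal matrix with rational entries and $Qe=e$, for which $Q^{\mathrm T}A(G)Q=A(H)$. Because $Qe=e$, applying both sides to $A(\cdot)^k e$ yields $W(H)=Q^{\mathrm T}W(G)$; in particular $\det W(H)=\pm\det W(G)$, so $\eta$ is a generalized-spectrum invariant, and since $\eta(G)$ is odd (hence nonzero) $W(G)$ is nonsingular and $Q=W(G)^{-\mathrm T}W(H)^{\mathrm T}$ is \emph{determined} by the pair $(G,H)$. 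It therefore suffices to prove that every such $Q$ is a permutation matrix, for then $G\cong H$.

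The device I would use is the \emph{level} $\ell=\ell(Q)$, the least positive integer with $\ell Q$ integral. If $\ell=1$ then $Q$ is an integral orthogonal matrix; its rows are integer unit vectors, so $Q$ is a signed permutation matrix, and $Qe=e$ forces every sign to be $+1$, making $Q$ a genuine permutation matrix. Thus the entire proof reduces to showing $\ell=1$. From $Q=W(G)^{-\mathrm T}W(H)^{\mathrm T}$ and Cramer's rule one sees $\ell\mid\det W(G)=\pm2^{\lfloor n/2\rfloor}\eta(G)$, so the task is to exclude every prime divisor of this number from $\ell$.

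Next I would show $\ell$ is odd. Since $\eta(G)$ is odd, the $2$-adic valuation of $\det W(G)$ equals exactly $\lfloor n/2\rfloor$, which pins down the behaviour of $W(G)$ modulo $2$. Reducing the identities $(\ell Q)(\ell Q)^{\mathrm T}=\ell^2 I$ and $(\ell Q)^{\mathrm T}W(G)=\ell\,W(H)$ modulo $2$ and feeding in this maximal $2$-rank information rules out $2\mid\ell$. The remaining, and I expect hardest, step is to show that no odd prime divides $\ell$. Fix an odd prime $p\mid\ell$ and put $M=\ell Q$. Minimality of $\ell$ gives $\overline M\neq0$ over $\mathbb F_p$, while $MM^{\mathrm T}=\ell^2 I$ gives $\overline M\,\overline M^{\mathrm T}=0$, so the row space of $\overline M$ is totally isotropic; reducing $M^{\mathrm T}W(G)=\ell\,W(H)$ modulo $p$ gives $\overline M^{\mathrm T}\,\overline{W(G)}=0$, forcing the columns of $\overline{W(G)}$ into the orthogonal complement of the column space of $\overline M$, and $Me=\ell e$ gives the further relation $\overline M\,e=0$. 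The crux is to combine these isotropy and annihilation relations to conclude $\operatorname{rank}_{\mathbb F_p}\overline{W(G)}\le n-2$, equivalently $p^2\mid\det W(G)$. Because $p$ is odd this would give $p^2\mid\eta(G)$, contradicting the square-freeness of $\eta(G)$; hence no odd prime divides $\ell$, and together with oddness we obtain $\ell=1$, completing the proof.

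The main obstacle is exactly the rank-drop-by-two estimate in the last step: a single application of orthogonality only yields $\operatorname{rank}_{\mathbb F_p}\overline{W(G)}\le n-1$ (hence merely $p\mid\det W(G)$), so the delicate part is to extract a \emph{second} independent relation — morally, that a totally isotropic row space of $\overline M$ over $\mathbb F_p$ with $p$ odd cannot interact with $\overline M^{\mathrm T}\overline{W(G)}=0$ and $\overline M\,e=0$ in a way that keeps the corank at $1$ — which is what converts square-freeness of $\eta(G)$ into $\ell=1$.
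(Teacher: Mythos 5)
This statement is quoted from \cite{Wang2} and is not proved in the present paper, so there is no in-paper proof to compare against; judged against the actual argument in \cite{Wang2}, your proposal correctly reproduces the standard framework (the Wang--Xu reduction to a regular rational orthogonal $Q$ with $Q^{\mathrm T}A(G)Q=A(H)$, the identity $W(H)=Q^{\mathrm T}W(G)$, the level $\ell(Q)$, the divisibility $\ell\mid\det W(G)$, and the reduction to $\ell=1$), but it stops short of a proof at exactly the two places where all the work lies. First, the claim that $2\nmid\ell$ is asserted in one sentence (``feeding in this maximal $2$-rank information rules out $2\mid\ell$''); in \cite{Wang2} this is a substantial standalone argument requiring a careful analysis of the Smith normal form of $W(G)$ over $\mathbb{Z}_2$ and the fact that $\mathrm{rank}_{\mathbb{F}_2}W(G)=\lceil n/2\rceil$ when $\eta(G)$ is odd (which itself rests on Lemma \ref{even1}); nothing in your sketch actually carries this out.

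Second, and more seriously, the odd-prime step is not only left open (as you acknowledge) but is aimed at the wrong target. Your parenthetical ``equivalently'' is false: $\mathrm{rank}_{\mathbb{F}_p}\overline{W(G)}\le n-2$ implies $p^2\mid\det W(G)$, but not conversely (the Smith normal form could be $\mathrm{diag}(1,\dots,1,p^2)$). More to the point, the argument in \cite{Wang2} does \emph{not} establish a corank-two drop. It splits into cases on $r=\mathrm{rank}_{\mathbb{F}_p}(\overline{\ell Q})$: if $r\ge 2$ the annihilation relation $\overline{\ell Q}^{\mathrm T}\,\overline{W(G)}=0$ already gives corank at least $2$; but if $r=1$, one only gets a single left-kernel vector $v$ of $\overline{W(G)}$, which the isotropy relations force to satisfy $v^{\mathrm T}v\equiv 0$ and $v^{\mathrm T}e\equiv 0\pmod p$, and one then needs a separate lemma --- that whenever $p\,\|\det W(G)$ the one-dimensional left kernel of $W(G)$ over $\mathbb{F}_p$ is spanned by an \emph{anisotropic} vector (proved using $v^{\mathrm T}A^k e\equiv 0$ for all $k$, the consequent eigenvector relation $Av\equiv\lambda v$, and the Gram structure of $W^{\mathrm T}W$) --- to reach the contradiction with square-freeness. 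Without that lemma, the isotropy and annihilation relations you list are consistent with corank exactly $1$, and the proof does not close.
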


 Some time ago, the third author of this paper proposed the following

\begin{conj}\label{parity}
 Let $G$ and $H$ be two cospectral graphs. Then $\eta(G)$ and $\eta(H)$ have the same parity, i.e. $\eta(G)\equiv~\eta(H)\pmod{2}$.
\end{conj}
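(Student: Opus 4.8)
The plan is to reduce the parity of $\eta$ to a determinant of walk counts and then feed in the mod-$4$ congruence stated in the abstract. Write $W=W(G)$ and observe that, since $A(G)$ is symmetric, the Gram matrix $M:=W^{\rm T}W$ is the Hankel matrix of walk counts, $M_{k\ell}=(A^{k}e)^{\rm T}(A^{\ell}e)=e^{\rm T}A^{k+\ell}e$ for $0\le k,\ell\le n-1$. Hence
\[\eta(G)^2=4^{-\lfloor n/2\rfloor}(\det W)^2=4^{-\lfloor n/2\rfloor}\det M.\]
Because $\eta(G)$ is a nonnegative integer, it has the same parity as $\eta(G)^2$, so it suffices to prove that $\det M(G)/4^{\lfloor n/2\rfloor}$ and $\det M(H)/4^{\lfloor n/2\rfloor}$ agree modulo $2$.

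The key structural input I would isolate is the elementary lemma that $w_m:=e^{\rm T}A^m e$ is even for every $m\ge 1$ (while $w_0=n$). I would prove this by pairing each walk with its reversal: a walk of odd length can never equal its reversal in a loopless graph, and the self-reverse walks of even length $m$ are in bijection with arbitrary walks of length $m/2$, giving $w_m\equiv 0$ for $m$ odd and $w_m\equiv w_{m/2}$ for $m$ even, whence $w_m$ is even for all $m\ge 1$. (Equivalently, $e^{\rm T}A^{2l+1}e=(A^le)^{\rm T}A(A^le)\equiv 0$ since $A$ is alternating mod $2$, and $|A^le|^2\equiv e^{\rm T}A^le \pmod 2$.) Consequently every entry of $M$ off the $(0,0)$ position is even, so each row of $M$ with index $1\le i\le n-1$ is entirely even; moreover when $n$ is even the $0$th row is even as well. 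The number of entirely-even rows is therefore exactly $2\lfloor n/2\rfloor$ in both parities of $n$, and dividing each of them by $2$ produces an integer matrix $M^{\star}$ with $\det M=4^{\lfloor n/2\rfloor}\det M^{\star}$, i.e. $\eta(G)^2=\det M^{\star}$.

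Now comes the crucial point, which is also where the strength of the main congruence is used. On an even (divided) row one has $M^{\star}_{ij}=w_{i+j}/2$, so $M^{\star}\bmod 2$ is completely determined by the residues $w_{k}\bmod 4$; on the single undivided row (present only when $n$ is odd) the entries reduce to $(1,0,\dots,0)\bmod 2$. Thus $\det M^{\star}\bmod 2$ depends only on the walk counts modulo $4$. Invoking the main theorem $w_k(G)\equiv w_k(H)\pmod 4$ gives $M^{\star}(G)\equiv M^{\star}(H)\pmod 2$, hence $\eta(G)^2=\det M^{\star}(G)\equiv\det M^{\star}(H)=\eta(H)^2\pmod 2$, and therefore $\eta(G)\equiv\eta(H)\pmod 2$.

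The main obstacle here is conceptual rather than computational: one must match the exact power $4^{\lfloor n/2\rfloor}$ appearing in the definition of $\eta$ to the parity structure of walks, so that after extracting it the residual determinant is governed \emph{precisely} by the mod-$4$ data. Note that a mere mod-$2$ congruence of walk counts would not determine $w_k/2\bmod 2$ and would thus be insufficient; this is exactly why the mod-$4$ refinement supplied by the main theorem is the right (and necessary) tool. Accordingly, the genuinely deep input is the main congruence itself, while the verification that the number of fully-even rows is precisely $2\lfloor n/2\rfloor$ is the delicate bookkeeping that makes the division by $4^{\lfloor n/2\rfloor}$ land in the mod-$4$ regime.
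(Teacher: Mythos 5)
Your argument is correct and follows essentially the same route as the paper's proof of Theorem~\ref{parity11}: both reduce $\eta^2$ to the determinant of an integer matrix obtained by halving the Gram/Hankel matrix $W^{\rm T}W=(e^{\rm T}A^{i+j-2}e)$, using Lemma~\ref{even1} to justify the division and Theorem~\ref{equiv1} to get the mod-$2$ congruence of the halved entries. The only difference is bookkeeping in the odd-$n$ case (you divide the $2\lfloor n/2\rfloor$ fully even rows by $2$, while the paper rescales the first column of $W$ by $2$ and then halves the whole matrix), which is immaterial.
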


In this paper, we show the above conjecture is true, therefore we have the following

\begin{thm}\label{parity11}
Conjecture~\ref{parity} is true, and thus the parity of $\eta(G)$ is an invariant for cospectral graphs.
\end{thm}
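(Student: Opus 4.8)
The plan is to pass from the parity of $\eta$ to that of $\eta^2$, which can be expressed through the walk numbers $w_m:=e^{\rm T}A(G)^m e$ that the main congruence controls. Since $x\equiv x^2\pmod 2$ for every integer $x$ and $\eta(G)$ is a nonnegative integer, it suffices to prove $\eta(G)^2\equiv\eta(H)^2\pmod 2$. Using $A(G)=A(G)^{\rm T}$ one has $W(G)^{\rm T}W(G)=\big(e^{\rm T}A(G)^{i+j}e\big)_{0\le i,j\le n-1}=:H_n(G)$, the Hankel matrix of walk numbers, so that
\[\eta(G)^2=2^{-2\lfloor n/2\rfloor}\det\big(W(G)^{\rm T}W(G)\big)=2^{-2\lfloor n/2\rfloor}\det H_n(G).\]
By the main congruence, $w_m(G)\equiv w_m(H)\pmod 4$ for all $m\ge 0$, hence $H_n(G)\equiv H_n(H)\pmod 4$ entrywise. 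Everything is thereby reduced to a statement about the $2$-adic behaviour of the determinant of a Hankel matrix whose entries are known only modulo $4$.

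The naive step -- ``entries congruent mod $4$ imply determinants congruent mod $4$'' -- is by itself too weak, and this is the main obstacle. Indeed, from $\det H_n=(\det W)^2$ and the divisibility $2^{\lfloor n/2\rfloor}\mid\det W$ of~\cite{WX1} one gets $v_2(\det H_n)=2\,v_2(\det W)\ge 2\lfloor n/2\rfloor$, where $v_2$ denotes the $2$-adic valuation; and $\eta$ is odd precisely when this valuation is as small as possible, i.e. $v_2(\det W)=\lfloor n/2\rfloor$. Distinguishing $v_2(\det H_n)=2\lfloor n/2\rfloor$ from $v_2(\det H_n)\ge 2\lfloor n/2\rfloor+2$ would in general require knowing $\det H_n$ modulo $2^{2\lfloor n/2\rfloor+1}$, which for large $n$ is far beyond what the entrywise mod-$4$ information delivers. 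So the crude determinant-polynomial argument must be replaced by one that extracts the forced power of $2$ \emph{before} reducing modulo $4$.

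To do this I would analyse $H_n$ over the $2$-adic integers $\mathbb{Z}_2$ by symmetric (congruence) elimination, driven by the structural identity $w_{2k}\equiv w_k\pmod 2$, which follows from $w_{2k}=\|A^k e\|^2\equiv e^{\rm T}A^k e=w_k\pmod 2$. The point is that the $\lfloor n/2\rfloor$ ``even'' invariant factors of $W$ over $\mathbb{Z}_2$ each carry valuation exactly $1$ when $\eta$ is odd, so the elimination can be organised so that every pivot used to peel off the guaranteed factor $2^{2\lfloor n/2\rfloor}$ has valuation $\le 1$. Dividing by a valuation-$1$ pivot converts a quantity known mod $4$ into an output known mod $2$; consequently, once the forced power of two has been removed, the residual ``odd core'' -- whose parity is exactly the parity of $\eta^2$ -- depends only on the entries $w_{i+j}$ modulo $4$. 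Carrying out the reduction inside the Gram matrix $H_n$ (so that only the invariants $w_m$, never the individual, non-invariant entries of $W$, are ever touched) should yield a formula for $\eta(G)^2\bmod 2$ as a fixed function of $\{w_m(G)\bmod 4\}_{0\le m\le 2n-2}$.

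Granting this reduction, the conclusion is immediate: since $w_m(G)\equiv w_m(H)\pmod 4$ for all relevant $m$, the function evaluates equally on $G$ and $H$, whence $\eta(G)^2\equiv\eta(H)^2\pmod 2$ and therefore $\eta(G)\equiv\eta(H)\pmod 2$. The singular case $\det W=0$ (where $\eta=0$ is even) is subsumed, as then $v_2(\det H_n)=\infty>2\lfloor n/2\rfloor$ on both sides. The technical heart -- and the step I expect to be most delicate -- is the valuation-tracking lemma of the previous paragraph: proving that peeling off $2^{2\lfloor n/2\rfloor}$ never requires the $w_m$ to a $2$-adic precision finer than mod $4$, so that the decisive parity bit is genuinely read off from the mod-$4$ data supplied by the main congruence.
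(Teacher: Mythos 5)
Your reduction to $\eta(G)^2\equiv\eta(H)^2\pmod 2$ via $\det\bigl(W^{\rm T}W\bigr)=2^{2\lfloor n/2\rfloor}\eta^2$ is exactly the right first move, and it matches the paper. But the argument then stalls: the ``technical heart'' (the $2$-adic elimination with valuation-tracked pivots) is only announced, not carried out, and as stated it is not clear it can be organised using the walk numbers alone rather than the Smith normal form of $W$ over $\mathbb{Z}_2$, which is not visible from the Gram matrix. So as written there is a genuine gap.

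The missing ingredient is much simpler than what you propose, and it dissolves the obstacle you flag. By Lemma~4.2 of \cite{WX1} (Lemma~\ref{even1} in the paper), $e^{\rm T}A^m e$ is even for every $m\ge 1$. Hence for $n$ even \emph{every} entry of $W^{\rm T}W$ is even (the $(1,1)$ entry is $n$), so one may divide the whole matrix by $2$: the matrix $\tfrac12 W^{\rm T}W$ is integral, has determinant $2^{-n}(\det W)^2=\eta^2$ exactly, and its entries are determined modulo $2$ by the walk numbers modulo $4$. Now the ``naive'' step you dismissed does suffice: congruent entries mod $2$ give congruent determinants mod $2$, i.e.\ $\eta(G)^2\equiv\eta(H)^2\pmod 2$. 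Equivalently, in your language: because all entries of $H_n$ are even, every $(n-1)\times(n-1)$ minor is divisible by $2^{n-1}$, so perturbing an entry by $4$ changes $\det H_n$ by a multiple of $2^{n+1}>2^{2\lfloor n/2\rfloor}$ --- the precision loss you feared does not occur. For $n$ odd the $(1,1)$ entry $n$ is odd, and the paper handles this by replacing the first column $e$ of one factor by $2e$, forming $W_1:=[2e,Ae,\dots,A^{n-1}e]$; then $\tfrac12 W_1^{\rm T}W$ is integral with determinant $2^{-(n-1)}(\det W)^2=\eta^2$, and the same one-line determinant comparison finishes the proof. I would encourage you to replace the sketched elimination by this use of Lemma~\ref{even1}; with it, your outline becomes a complete and correct proof essentially identical to the paper's.
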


Based on Theorem~\ref{parity11}, we are able to show that under certain conditions, every graph cospectral with $G$ is DGS; see Corollary~\ref{cor1} for more details. The key for proving Theorem~\ref{parity11} is the following theorem, which gives another new invariant for cospectral graphs.

\begin{thm}\label{equiv1}
Let $G$ and $H$ be two cospectral graphs with adjacency matrices $A(G)$ and $A(H)$, respectively. Then we have $e^{\rm T}A(G)^me\equiv e^{\rm T}A(H)^me\pmod{4}$ for any integer $m\geq 0$.
\end{thm}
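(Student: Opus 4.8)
The plan is to first dispose of the congruence modulo $2$ and then isolate the genuinely new content in the lift to modulo $4$. Writing $A=A(G)$ and using that $A=A^{\mathrm T}$ has zero diagonal, I split $A^m$ into its diagonal and off-diagonal parts, which gives $e^{\mathrm T}A^me=\Tr(A^m)+2\sum_{i<j}(A^m)_{ij}$. Since $G$ and $H$ are cospectral, $\Tr(A(G)^m)=\Tr(A(H)^m)$ for every $m$, so the statement is equivalent to $\sum_{i<j}(A(G)^m)_{ij}\equiv\sum_{i<j}(A(H)^m)_{ij}\pmod 2$. In particular the congruence modulo $2$ is immediate, and what really has to be controlled is the \emph{parity of the off-diagonal walk count}.

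To treat all $m$ simultaneously I would pass to the walk generating function $\sum_{m\ge0}(e^{\mathrm T}A^me)x^m=e^{\mathrm T}(I-xA)^{-1}e$. Applying the matrix determinant lemma to the rank-one matrix $J=ee^{\mathrm T}$ expresses its numerator over $\det(I-xA)$ as $N_G(x)=\det(I-xA)-\det(I-xA-J)$. Now $\det(I-xA)$ is the reciprocal polynomial of $\phi(G;x)$, hence a cospectral invariant with constant term $1$, so it is a unit in $(\mathbb Z/4)[[x]]$; clearing it shows the theorem is equivalent to the single polynomial congruence $\det(I-xA(G)-J)\equiv\det(I-xA(H)-J)\pmod 4$. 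The substitution $y=1+x$ makes this transparent: since $I-xA-J=-(yA+\overline{A})$, where $\overline{A}=J-I-A$ is the adjacency matrix of the complement, the theorem becomes the assertion that the mixed characteristic polynomial $\det(yA+\overline{A})$ is a cospectral invariant modulo $4$. This reframing is appealing because it ties the statement directly to the complement, i.e. to the generalized spectrum.

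Next I would expand the determinant combinatorially. Only derangements survive (the diagonal of $I-xA-J$ is zero), and because the entries of $A$ lie in $\{0,1\}$ each derangement $\sigma$ contributes $\sign(\sigma)(1+x)^{f(\sigma)}$, where $f(\sigma)=\#\{i:\,v_iv_{\sigma(i)}\in E(G)\}$; thus $\det(I-xA-J)=(-1)^n\sum_{\sigma}\sign(\sigma)(1+x)^{f(\sigma)}$. Reading off coefficients in the basis $(1+x)^r$ reduces the theorem to $a_r(G)\equiv a_r(H)\pmod 4$ with $a_r(G)=\sum_{f(\sigma)=r}\sign(\sigma)$. The key symmetry is the involution $\sigma\mapsto\sigma^{-1}$, available precisely because $A$ is symmetric: it preserves both $\sign$ and $f$, pairs up the non-involutive derangements, and fixes exactly the fixed-point-free involutions, i.e. the perfect matchings $M$ of $[n]$, for which $f(M)=2\lvert M\cap E(G)\rvert$. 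Modulo $2$ this collapses $a_r$ to a signed count of perfect matchings weighted by how many edges they share with $G$, which I would then compare with the corresponding count for $H$.

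The main obstacle is the lift from modulo $2$ to modulo $4$. The involution $\sigma\mapsto\sigma^{-1}$ determines $a_r$ only modulo $2$, since each paired orbit contributes $2\sign(\sigma)$; to pin down $a_r\pmod 4$ I must additionally control the number of length-two orbits modulo $2$ while expressing the surviving matching statistics in terms of quantities manifestly preserved by cospectrality (the power sums $\Tr(A^k)$, equivalently the coefficients of $\phi(G;x)$). Concretely, the crux is to show that the sign-weighted matching counts $\#\{M:\lvert M\cap E(G)\rvert=s\}$ — or, equivalently, the mixed minors of $(A,\overline{A})$ — are congruent modulo $4$ for cospectral graphs. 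I expect this to need either a refined second involution on the set of length-two orbits, or a $2$-adic analysis of the Hankel matrix $\big(e^{\mathrm T}A^{i+j}e\big)_{i,j}=W(G)^{\mathrm T}W(G)$, whose entries are exactly the quantities in question and whose determinant equals $2^{2\lfloor n/2\rfloor}\eta(G)^2$; tracking the $2$-adic structure of this Gram matrix is, I believe, where the real difficulty lies.
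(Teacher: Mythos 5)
There is a genuine gap, and you have located it yourself: everything before your final paragraph is a (correct) reformulation of the statement, and the final paragraph concedes that you do not know how to prove the reformulated statement. Your first paragraph settles the congruence modulo $2$, which is indeed immediate from $e^{\mathrm T}A^me=\Tr(A^m)+2\sum_{i<j}(A^m)_{ij}$; your second paragraph, via the generating function and the matrix determinant lemma, shows that the theorem is equivalent to $\phi(\bar G;x)\equiv\phi(\bar H;x)\pmod 4$ --- this is exactly the paper's Theorem~\ref{equiv} (proved there from Lemma~\ref{lem:hgt}), so you have in effect shown that Theorem~\ref{equiv1} and Theorem~\ref{equiv3} are equivalent, not proved either. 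The derangement expansion and the involution $\sigma\mapsto\sigma^{-1}$ then recover the mod~$2$ statement a second time, but the involution inherently loses a factor of $2$: each free orbit contributes $2\sign(\sigma)$, and nothing in your argument controls the number of such orbits modulo $2$, which is precisely the mod-$4$ information the theorem is about. The claim that the signed matching counts $a_r(G)=\sum_{f(\sigma)=r}\sign(\sigma)$ are cospectral invariants modulo $4$ is the whole theorem in disguise, and it is asserted, not proved.

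For comparison, the paper closes this gap by working with closed walks rather than permutations. The group generated by cyclic translation and reversal acts on the closed walks of length $4m$ that have trivial translation stabilizer; the free orbits under reversal have size $8m$ and vanish modulo $4$ after division by $2m$, while Lemma~\ref{lem:akconv} shows each non-free orbit contains exactly two palindromic representatives, and Lemma~\ref{lem:wn2} identifies palindromic closed walks of length $2m$ with ordinary walks of length $m$ --- the analogue, one level up, of your observation that the fixed points of $\sigma\mapsto\sigma^{-1}$ are perfect matchings. This yields the identity
\[
e^{\mathrm T}A^{2m}e-e^{\mathrm T}A^me\equiv \frac{1}{2^{t+1}}\Tr\left(A^{4m}-A^{2m}\right)\pmod 4
\]
of Theorem~\ref{thm:tracemod4} (first for $m$ a power of $2$, then in general via the trace congruences of Lemmas~\ref{lem:a1a22} and~\ref{lem:trace2t}), whose right-hand side is manifestly a cospectral invariant; summing the telescope gives Proposition~\ref{prop:tracemod4} and hence the theorem. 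If you want to salvage your determinant approach, you would need an analogous second symmetry on the free orbits of $\sigma\mapsto\sigma^{-1}$, or an independent proof that the mixed minors of $(A,\bar A)$ are determined modulo $4$ by the spectrum; neither is supplied, and the Gram-matrix remark at the end does not substitute for one, since $W(G)^{\mathrm T}W(G)$ is built from the very quantities you are trying to control.
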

To state the third new invariant, consider the following question:
Given a pair of cospectral graphs $G$ and $H$, what can be said about the characteristic polynomials of their complements? In the following, we derive an equivalent formulation of Theorem \ref{equiv1}, providing an unexpected answer to this question and simultaneously revealing the third new invariant. Recall that two polynomials are \emph{congruent} modulo $m$ if the coefficients of their difference are divisible by $m$.

\begin{thm}\label{equiv3}
Let $G$ and $H$ be two cospectral graphs with complements $\bar{G}$ and $\bar{H}$, respectively. Then we have $\phi(\bar{G};x)\equiv \phi(\bar{H};x)\pmod{4}$.
\end{thm}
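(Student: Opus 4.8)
The plan is to reduce Theorem~\ref{equiv3} to Theorem~\ref{equiv1} via the classical formula expressing the characteristic polynomial of the complement through the walk generating function. Writing $A=A(G)$ and using $A(\bar{G})=J-I-A$ with $J=ee^{\rm T}$, I would apply the matrix-determinant lemma to the rank-one perturbation $(x+1)I+A-ee^{\rm T}$ to obtain
\[\phi(\bar{G};x)=\det\big((x+1)I+A\big)-e^{\rm T}\operatorname{adj}\big((x+1)I+A\big)e.\]
Both terms are polynomials in $x$: the first equals $(-1)^n\phi(G;-x-1)$, and, using $\operatorname{adj}(cM)=c^{\,n-1}\operatorname{adj}(M)$, the second equals $(-1)^{n-1}P_G(-x-1)$, where $P_G(x):=e^{\rm T}\operatorname{adj}(xI-A)e$ is the numerator of the walk generating function $W_G(x)=e^{\rm T}(xI-A)^{-1}e=P_G(x)/\phi(G;x)$. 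Hence $\phi(\bar{G};x)=(-1)^n\big[\phi(G;-x-1)+P_G(-x-1)\big]$.

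Since $G$ and $H$ are cospectral, $\phi(G;x)=\phi(H;x)$, so the terms $\phi(G;-x-1)$ cancel in $\phi(\bar{G};x)-\phi(\bar{H};x)$, and the whole problem collapses to proving $P_G(x)\equiv P_H(x)\pmod 4$ (the substitution $x\mapsto -x-1$ is an integer polynomial automorphism and preserves congruences modulo $4$). To access $P_G$ I would use the Laurent expansion $W_G(x)=\sum_{m\geq 0}w_m(G)\,x^{-m-1}$, where $w_m(G)=e^{\rm T}A(G)^m e$ is the number of walks of length $m$ appearing in Theorem~\ref{equiv1}. From $P_G=\phi(G;\cdot)\,W_G$ together with $\phi(G;\cdot)=\phi(H;\cdot)$ I obtain the formal identity
\[P_G(x)-P_H(x)=\phi(G;x)\sum_{m\geq 0}\big(w_m(G)-w_m(H)\big)x^{-m-1}.\]
By Theorem~\ref{equiv1} every coefficient $w_m(G)-w_m(H)$ is divisible by $4$, so the right-hand side is $4$ times $\phi(G;x)$ multiplied by an integer-coefficient Laurent series.

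The step I expect to be the main obstacle is the final integrality check: concluding from the displayed identity that $(P_G-P_H)/4$ genuinely has integer coefficients, i.e.\ that the divisibility by $4$ of every walk number really descends to a coefficientwise congruence of polynomials. Here I would argue that $P_G-P_H$ is an honest polynomial of degree at most $n-1$, while the right-hand side is the product of the monic integer polynomial $\phi(G;x)=\sum_{j}a_j x^j$ with the integer Laurent series $\sum_m c_m x^{-m-1}$, where $c_m=(w_m(G)-w_m(H))/4\in\mathbb{Z}$; extracting the coefficient of each $x^k$ with $k\geq 0$ yields a finite integer combination $\sum_{j\geq k+1}a_j c_{j-k-1}$. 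Since the two sides agree as Laurent series and the left-hand side has no negative powers, this forces $(P_G-P_H)/4\in\mathbb{Z}[x]$, hence $P_G\equiv P_H\pmod 4$ and therefore $\phi(\bar{G};x)\equiv\phi(\bar{H};x)\pmod 4$. A sanity check on a small pair such as $K_2$ and its complement (where $P_{K_2}(x)=2x+2$) confirms the sign conventions in the complement formula.
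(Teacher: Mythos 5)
Your proof is correct and follows essentially the same route as the paper: both reduce Theorem~\ref{equiv3} to Theorem~\ref{equiv1} through the identity relating $\phi(\bar{G};x)$ to $\phi(G;x)$ and the walk generating function $\sum_m e^{\rm T}A^m e\,x^{-m-1}$, and then compare coefficients of the resulting formal series modulo $4$. The only difference is cosmetic: the paper cites this identity as Theorem 1.11 of Cvetkovi\'c--Doob--Sachs (Lemma~\ref{lem:hgt}) and works with power series in $t=1/x$, whereas you re-derive it via the matrix-determinant lemma and work with Laurent series at infinity.
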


\subsection{Applications}

We shall provide several interesting applications of the above newly obtained invariants for cospectral graphs in what follows.

First, we give an application of the above results for the generalized spectral characterizations of graphs, which constitutes the original motivation for this work. Let $f(x)\in{\mathbb{Z}}[x]$ be a polynomial with leading coefficient $a_0$. Let $\alpha_1,\alpha_2,\ldots,\alpha_n$ be all the complex roots of $f$.
The \emph{discriminant} of $f(x)$, denoted by $\Delta(f)$, is defined as
$$a_0^{2n-2}\prod_{1\leq i<j\leq n}(\alpha_i-\alpha_j)^2.$$
For example, if $f(x)=ax^2+bx+c$, then $\Delta(f)=b^2-4ac$.
The \emph{discriminant} of a matrix $A$, denoted by $\Delta(A)$, is defined as the discriminant of its characteristic polynomial $\phi(x)=\det(xI -A)$.
 The \emph{discriminant} of a graph $G$, denoted by $\Delta(G)$, is defined as the discriminant of its adjacency matrix.

More recently, Wang et al.~\cite{WYZ} obtained, among others, the following result for DGS graphs.

\begin{thm}[\cite{WYZ}, Corollary 2]\label{Dis} Let $G$ be a graph such that
{\rm i)} $\Delta(G)$ is square-free for odd primes; and {\rm ii)} $\eta(G)$ is odd. Then $G$ is DGS.
\end{thm}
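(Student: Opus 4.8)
The plan is to reduce the assertion ``$G$ is DGS'' to the statement that a single integer invariant --- the \emph{level} of an associated orthogonal matrix --- equals $1$, and then to force this by controlling one prime at a time: the prime $2$ via hypothesis (ii) and the odd primes via hypothesis (i). First I would set up the Wang--Xu framework. Since $\eta(G)$ is odd we have $\det W(G)\neq 0$, so $G$ is controllable. For any $H$ that is cospectral with $G$ and has cospectral complement, there is then a unique regular rational orthogonal matrix $Q$ --- meaning $QQ^{\rm T}=I$, $Q$ rational and $Qe=e$ --- satisfying $Q^{\rm T}A(G)Q=A(H)$; explicitly $Q=W(H)W(G)\inv$. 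Let $\ell$ be the level of $Q$, the least positive integer with $\ell Q$ integral. An integral orthogonal matrix is a signed permutation matrix, and $Qe=e$ forces every sign to be $+1$; hence $\ell=1$ already yields a permutation matrix $Q$, and therefore $H\cong G$. Thus it suffices to prove $\ell=1$, for which I would show that no prime divides $\ell$.

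At the prime $2$ I would invoke the $2$-adic analysis underlying Wang's earlier theorem: the hypothesis that $\eta(G)$ is odd forces $\ell$ to be odd, so $2\nmid\ell$. The real content is therefore to handle the odd primes, where I claim the following implication does the job: for every odd prime $p$, if $p\mid\ell$ then $p^{2}\mid\Delta(G)$. Granting this, hypothesis (i) --- that $\Delta(G)$ is square-free away from $2$ --- rules out every odd prime divisor of $\ell$, and combined with $2\nmid\ell$ we conclude $\ell=1$.

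To reach the implication I would work first over $\mathbb{F}_p$ and then refine $p$-adically. Write $R=\ell Q\in\mathbb{Z}^{n\times n}$ with $p^{k}\,\|\,\ell$, $k\ge 1$; minimality of the level gives $\overline R\neq 0$ over $\mathbb{F}_p$, orthogonality gives $RR^{\rm T}=\ell^{2}I$, and $Q^{\rm T}A(G)Q=A(H)$ gives $A(G)R=RA(H)$. Reducing modulo $p$, the row space $U=\mathrm{rowsp}(\overline R)\subseteq\mathbb{F}_p^{n}$ is nonzero, totally isotropic (from $\overline R\,\overline R^{\rm T}=0$), and invariant under the symmetric matrix $\overline{A(H)}$. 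Because $\overline{A(H)}$ is symmetric, $U^{\perp}$ is invariant as well and the bilinear form identifies $\mathbb{F}_p^{n}/U^{\perp}$ with the dual of $U$; comparing characteristic polynomials along $U\subseteq U^{\perp}\subseteq\mathbb{F}_p^{n}$ then yields $\overline{\phi}\equiv\chi_U^{\,2}\,g\pmod p$ with $\deg\chi_U\ge 1$. This already shows $p\mid\Delta(G)$.

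The main obstacle, and the step I expect to demand the most care, is upgrading $p\mid\Delta(G)$ to $p^{2}\mid\Delta(G)$. A square factor of $\overline\phi$ over $\mathbb{F}_p$ is not enough on its own: the polynomial $x^{2}-p$ reduces to $x^{2}$ yet has discriminant $4p$, of $p$-valuation only $1$. The way out is to exploit that orthogonality holds not merely modulo $p$ but modulo $p^{2k}$ with $2k\ge 2$: the relation $RR^{\rm T}=\ell^{2}I$ lets one lift the isotropic $A(H)$-invariant subspace $U$ to a submodule over $\mathbb{Z}/p^{2}$, promoting the factorization to $\phi\equiv\Phi_1\Phi_2\pmod{p^{2}}$ with $\Phi_1$ congruent to the square of a lift of $\chi_U$. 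Feeding this second-order square factorization into $\Delta(G)=\pm\Res(\phi,\phi')$ --- via multiplicativity of the discriminant and resultant under the coprime splitting $\Phi_1\Phi_2$, where $\Res(\Phi_1,\Phi_2)$ and $\mathrm{disc}(\Phi_2)$ are $p$-adic units --- then forces $v_p(\Delta(G))\ge 2$. Making this lifting precise, rather than stopping at the $\mathbb{F}_p$-level picture, is the crux of the whole argument.
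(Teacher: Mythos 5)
First, a point of order: the paper does not prove this statement --- Theorem~\ref{Dis} is imported verbatim from \cite{WYZ} (Corollary~2 there), so there is no in-paper proof to compare against. Measured against the argument in that reference, your architecture is the right one and essentially the same: reduce DGS to showing that the level $\ell$ of the regular rational orthogonal matrix $Q$ with $Q^{\rm T}A(G)Q=A(H)$, $Qe=e$ equals $1$; dispose of the prime $2$ by Wang's theorem that $\eta(G)$ odd forces $\ell$ odd; and dispose of each odd prime via the implication ``$p\mid\ell\Rightarrow p^{2}\mid\Delta(G)$'', which is precisely the main theorem of \cite{WYZ}. Your reduction to $\ell=1$, the citation for $p=2$, and the mod-$p$ analysis producing the nonzero totally isotropic $\overline{A(H)}$-invariant subspace $U=\mathrm{rowsp}(\overline R)$ and the square factor $\overline\phi=\chi_U^{2}g$ are all correct (modulo a harmless transpose in $Q=W(H)W(G)\inv$).

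The gap is exactly where you locate it, but it is a genuine gap rather than a deferred routine verification. The mod-$p$ picture already yields $v_p(\Delta)\ge\deg\chi_U$ by the standard bound $v_p(\mathrm{disc}\,\phi)\ge\sum_i(e_i-1)\deg g_i$ for $\overline\phi=\prod_i g_i^{e_i}$, so the only case not settled there is $\dim U=1$ with $\overline\phi=(x-a)^{2}h$, $h(a)\ne 0$, $h$ squarefree. In that case Hensel's lemma gives $\phi=\Phi_1\Phi_2$ over $\mathbb{Z}_p$ with $\Phi_1=x^{2}-sx+q\equiv(x-a)^{2}\pmod p$ and $\mathrm{disc}(\Phi_2)$, $\Res(\Phi_1,\Phi_2)$ units, so $v_p(\Delta)=v_p(s^{2}-4q)$; since $p$ is odd, your claimed intermediate statement that ``$\Phi_1$ is congruent to the square of a lift of $\chi_U$ modulo $p^{2}$'' is \emph{literally equivalent} to $p^{2}\mid s^{2}-4q$, i.e.\ to the conclusion. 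Asserting that the lift of $U$ ``promotes the factorization'' is therefore assuming what must be proved, and no mechanism is given for why the lifted submodule forces it. What actually closes the argument is a computation of roughly the following shape: split $\mathbb{Z}_p^{n}=L_1\perp L_2$ into unimodular $A(H)$-invariant orthogonal summands corresponding to $\Phi_1,\Phi_2$; project $M=R^{\rm T}\mathbb{Z}_p^{n}$ (on which the form takes values in $\ell^{2}\mathbb{Z}_p\subseteq p^{2}\mathbb{Z}_p$) onto the rank-two summand $L_1$, noting that the complementary components lie in $pL_2$ so the form is still $\equiv 0\pmod{p^{2}}$ on $\pi_1(M)$, which contains a primitive vector $v$ of $L_1$; then write $B|_{L_1}$ in a basis $\{v,w\}$ and use its self-adjointness, the unimodularity of the Gram matrix, and $\langle v,v\rangle\equiv\langle Bv,v\rangle\equiv 0\pmod{p^{2}}$ to force $s^{2}-4q\equiv 0\pmod{p^{2}}$. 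Without some such argument the proposal does not establish the theorem.
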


As a consequence of Theorem~\ref{Dis} and Theorem~\ref{parity11}, we have the following corollary.

\begin{cor}\label{cor1}
Let $G$ be a graph such that
{\rm i)} $\Delta(G)$ is square-free for odd primes; and {\rm ii)} $\eta(G)$ is odd.
 Then every graph that is cospectral with $G$ (including $G$ itself) is DGS.
\end{cor}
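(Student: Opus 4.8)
The plan is to bootstrap from Theorem~\ref{Dis} by checking that both of its hypotheses are in fact invariants for cospectral graphs, so that they transfer from $G$ to every cospectral mate. Concretely, let $H$ be any graph cospectral with $G$; I want to verify that $H$ itself satisfies conditions i) and ii) of Theorem~\ref{Dis}, and then conclude that $H$ is DGS. Since $G$ is trivially cospectral with itself, this simultaneously covers the case of $G$.

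First I would handle the discriminant condition. By definition, $\Delta(H)$ is the discriminant of the characteristic polynomial $\phi(H;x)$. Because $G$ and $H$ are cospectral, they share the same characteristic polynomial, hence $\Delta(H)=\Delta(G)$. Thus the discriminant is manifestly a cospectral invariant, and condition i) for $H$ follows immediately from condition i) for $G$: $\Delta(H)$ is square-free for odd primes.

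Next I would handle the parity of $\eta$. This is where the genuine content lies: unlike the discriminant, $\eta(H)$ need not equal $\eta(G)$ in general, so its oddness is not obviously inherited. Here I invoke Theorem~\ref{parity11}, which asserts precisely that the parity of $\eta$ is preserved under cospectrality, i.e. $\eta(G)\equiv\eta(H)\pmod 2$. Since $\eta(G)$ is odd by hypothesis, it follows that $\eta(H)$ is odd, establishing condition ii) for $H$.

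With both hypotheses of Theorem~\ref{Dis} verified for the arbitrary cospectral mate $H$, that theorem yields that $H$ is DGS, completing the argument. The main (and in fact only) obstacle is entirely absorbed into Theorem~\ref{parity11}: the discriminant transfers for free, whereas the transfer of the oddness of $\eta$ depends essentially on the parity invariance established there. Once that invariance is granted, the corollary is a direct two-line deduction, which is why it is stated as a corollary rather than a standalone theorem.
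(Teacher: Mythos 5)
Your proposal is correct and follows essentially the same route as the paper's own proof: transfer condition i) to a cospectral mate $H$ via $\Delta(H)=\Delta(G)$ (since $\phi(G;x)=\phi(H;x)$), transfer condition ii) via Theorem~\ref{parity11}, and conclude by Theorem~\ref{Dis}. No gaps.
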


\begin{proof} Let $H$ be any graph that is cospectral with $G$. Then $\Delta(H)=\Delta(G)$ since $\phi(G)=\phi(H)$.
Moreover, By Theorem~\ref{parity11}, we have $\eta(H)\equiv~\eta(G)\pmod{2}$. Thus, $\eta(H)$ is also odd. Then by Theorem~\ref{Dis}, $H$ is DGS. \end{proof}

\begin{rmk}
Apparently, Corollary~\ref{cor1} is much stronger than Theorem~\ref{Dis}, which essentially says that whenever a graph $G$ satisfies that {\rm i)} $\Delta(G)$ is square-free for odd primes and {\rm ii)} $\eta(G)$ is odd, and suppose that $H_0:=G$ and $H_1,\ldots,H_s$ are all the cospectral mates of $G$,
then non of $\bar{H}_i$ and $\bar{H}_j$ are cospectral for $i\neq j$. Of course this is trivial when $s\leq 1$, but it seems hard to give a direct proof without using Theorem~\ref{parity11}, for $s\geq 2$.
\end{rmk}

Next, we show that the congruence relation between the characteristic polynomials of the complements of graphs $G$ and $H$ modulo $4$ implies that their certain combinatorial invariants must share the same parity or structural similarities modulo $4$. In particular, for trees, the spectrum carries more refined structural information which leads to a direct correspondence between their perfect matchings.

\begin{cor}\label{cor:perfectmatch}
Let $G,H$ be two trees and $\phi(\bar{G};x)=\phi(\bar{H},x)$, then $G$ has a perfect matching if and only if $H$ has a perfect matching.
\end{cor}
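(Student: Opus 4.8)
The plan is to apply Theorem~\ref{equiv3} \emph{to the complements themselves}, using that complementation is an involution. The hypothesis $\phi(\bar{G};x)=\phi(\bar{H};x)$ says precisely that $\bar{G}$ and $\bar{H}$ are cospectral. Regarding $\bar{G}$ and $\bar{H}$ as the input pair of Theorem~\ref{equiv3}, its conclusion — that the complements of the two cospectral graphs are congruent modulo $4$ — reads here as $\phi(\bar{\bar{G}};x)\equiv\phi(\bar{\bar{H}};x)\pmod{4}$, that is, $\phi(G;x)\equiv\phi(H;x)\pmod{4}$. So the exact equality of the complements' characteristic polynomials upgrades, through the new invariant, to a mod-$4$ congruence between the characteristic polynomials of the trees $G$ and $H$ themselves. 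Note that $\bar{G},\bar{H}$ cospectral forces them (and hence $G,H$) to share the same number of vertices $n$, so the comparison is legitimate.

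Next I would bring in the special structure of trees. For any forest $F$ on $n$ vertices, Sachs' coefficient theorem shows that the characteristic polynomial equals the matching polynomial, since the acyclicity rules out all cyclic contributions; concretely $\phi(F;x)=\sum_{k\ge 0}(-1)^k m_k(F)\,x^{n-2k}$, where $m_k(F)$ is the number of $k$-matchings. I would then compare constant terms on both sides of $\phi(G)\equiv\phi(H)\pmod{4}$. If $n$ is odd there is no $x^0$ term and neither tree can admit a perfect matching, so the claim holds vacuously. If $n$ is even, the constant term of $\phi(F)$ is $(-1)^{n/2}m_{n/2}(F)$, and $m_{n/2}(F)$ is exactly the number of perfect matchings of $F$.

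The final step uses the classical fact that a tree has at most one perfect matching (two distinct perfect matchings would produce an alternating cycle, contradicting acyclicity), so $m_{n/2}(G),m_{n/2}(H)\in\{0,1\}$. The congruence of constant terms gives $(-1)^{n/2}m_{n/2}(G)\equiv(-1)^{n/2}m_{n/2}(H)\pmod{4}$; cancelling the unit $(-1)^{n/2}$ yields $m_{n/2}(G)\equiv m_{n/2}(H)\pmod{4}$, and with both values in $\{0,1\}$ this forces $m_{n/2}(G)=m_{n/2}(H)$. Hence one number equals $1$ exactly when the other does, which is the asserted equivalence of perfect matchings.

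I expect the only genuinely nonobvious point — the ``obstacle'' — to be the first move: recognizing that Theorem~\ref{equiv3} should be fed the complements $\bar{G},\bar{H}$ rather than $G,H$, so that its conclusion lands on $G,H$. Once that is seen, the rest is standard forest combinatorics (matching polynomial identity plus uniqueness of the tree perfect matching). It is worth remarking that the argument is quite robust: since the relevant matching counts lie in $\{0,1\}$, a mere mod-$2$ agreement of the constant terms already suffices, so the full strength of the mod-$4$ congruence is not needed here.
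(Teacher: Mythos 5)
Your proposal is correct and follows essentially the same route as the paper: apply Theorem~\ref{equiv3} to the cospectral pair $\bar{G},\bar{H}$ to obtain $\phi(G;x)\equiv\phi(H;x)\pmod{4}$, then use Sachs' coefficient theorem together with the fact that a tree has at most one perfect matching to read off the conclusion from the constant terms. The paper's proof is merely terser, stating directly that a tree's constant term lies in $\{0,\pm1\}$ and is nonzero exactly when a perfect matching exists; your added remark that mod~$2$ agreement would already suffice is accurate but not used by the paper either.
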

{\begin{proof}
By Sachs' coefficients Theorem~(see \cite{CDS}), it is easy to see that the constant term of the characteristic polynomial of a tree can only be $0, 1$ or $-1$, and a tree has a perfect matching if and only if the constant term of its characteristic polynomial is either $1$ or $-1$. So the result follows directly from Theorem \ref{equiv3}.
\end{proof}

The following corollary shows that the spectral constraint for the complement of a graph $G$ also governs some local substructures such as triangles in $G$.

\begin{cor}\label{cor:triangle}
Let $G,H$ be two graphs with $\phi(\bar{G};x)=\phi(\bar{H},x)$, then the numbers of triangles of $G$ and $H$ have the same parity. In particular, if $G$ contains exactly one triangle, then $H$ also contains at least one triangle.
\end{cor}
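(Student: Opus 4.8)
The plan is to reduce the statement to a congruence between $\phi(G;x)$ and $\phi(H;x)$ modulo $4$, and then to read off the relevant coefficient via Sachs' theorem. The essential observation is that the hypothesis $\phi(\bar{G};x)=\phi(\bar{H};x)$ says precisely that $\bar{G}$ and $\bar{H}$ are cospectral. Although Theorem~\ref{equiv3} is phrased for a cospectral pair and \emph{its} complements, the natural move is to apply it not to $G,H$ but to the pair $(\bar{G},\bar{H})$: since $\bar{G}$ and $\bar{H}$ are cospectral and complementation is an involution, their complements are $G$ and $H$, so Theorem~\ref{equiv3} yields
\[\phi(G;x)\equiv\phi(H;x)\pmod 4.\]
This double-complement trick is the only genuinely clever step; everything afterward is bookkeeping.

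Next I would compare coefficients. Write $\phi(G;x)=\sum_{i=0}^n c_i(G)\,x^{n-i}$ and likewise for $H$; both are monic of the same degree $n$, because $\bar{G}$ and $\bar{H}$ (hence $G$ and $H$) share the same number of vertices. The congruence above forces $c_i(G)\equiv c_i(H)\pmod 4$ for every $i$, and in particular for $i=3$. By Sachs' coefficient theorem the only basic figures on three vertices are triangles, each contributing $(-1)^{1}2^{1}=-2$, so that $c_3(G)=-2\,t(G)$ and $c_3(H)=-2\,t(H)$, where $t(\cdot)$ denotes the number of triangles. Hence $2\,t(G)\equiv 2\,t(H)\pmod 4$, which is equivalent to $t(G)\equiv t(H)\pmod 2$; this is exactly the claim that the two triangle counts share the same parity.

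For the ``in particular'' assertion, if $G$ has exactly one triangle then $t(G)=1$ is odd, whence $t(H)$ is odd as well and in particular nonzero, forcing $H$ to contain at least one triangle.

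The point requiring the most care is not a computation but the correct invocation of Theorem~\ref{equiv3}: one must apply it to the complements and exploit that complementation is an involution, rather than trying to relate triangle counts in $G$ directly to spectral data of $\bar{G}$. The latter would be awkward, since the number of triangles in $G$ is governed by $\Tr A(G)^3$ and not by the spectrum of $\bar{G}$. Once the mod-$4$ congruence for $\phi(G)$ and $\phi(H)$ is in hand, the Sachs coefficient extraction is routine, the only things to verify being the sign and the factor $2$ in $c_3=-2\,t$.
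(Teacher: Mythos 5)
Your proposal is correct and follows essentially the same route as the paper: apply Theorem~\ref{equiv3} to the cospectral pair $(\bar{G},\bar{H})$ (using that complementation is an involution) to get $\phi(G;x)\equiv\phi(H;x)\pmod 4$, then extract the triangle count from the coefficient of $x^{n-3}$. The paper phrases the last step via $t(G)=\frac{1}{6}\Tr(A(G)^3)$ rather than Sachs' theorem, but this is the same identity $c_3=-2t$ in different clothing.
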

\begin{proof}
It is well-known that the number of triangle of graph $G$ is equal to $\frac{1}{6}\Tr(A(G)^3)$. So by Theorem \ref{equiv3}, we see that $\phi(G;x)\equiv \phi(H;x)\pmod{4}$. In particular, we have $\frac{1}{6}\Tr(A(G)^3)\equiv \frac{1}{6}\Tr(A(H)^3)\pmod{2}$. This proves the desired result.
\end{proof}

\begin{rmk}
In other words, Corollary~\ref{cor:perfectmatch} says that for two trees $G$ and $H$, if one has a perfect matching and the other does not, then their complements are not cospectral. We do not aware a proof of this result without using Theorem \ref{equiv3}. A
similar remark applies to Corollary~\ref{cor:triangle}.
\end{rmk}

The above new invariants also find unexpected applications in the polynomial reconstruction problem, proposed by Cvetkovi\'c in 1973.
Let $G_i=G-v_i$ be the $i$-th \emph{vertex-deleted subgraph} of $G$. The \emph{polynomial deck} of $G$, denoted by $\mathcal{P}(G):=\{\phi(G_1),\ldots,\phi(G_n)\}$,
 is the multiset of the characteristic polynomials of its vertex-deleted subgraphs. The \emph{polynomial reconstruction problem} asks whether the characteristic polynomial of a graph $G$ with $n\geq 3$ is reconstructible from its polynomial deck $\mathcal{P}(G)$. For certain special families of graphs, the answer is known to be affirmative, including regular graphs, trees, unicyclic graphs, some bipartite graphs, and some disconnected graphs. Moreover, some graph invariants are known to be reconstructible from the polynomial deck, e.g.~the degree sequence, the length of the shortest odd cycle, the number of triangles, quadrangles, pentagons are determined by the polynomial deck.
However, the problem remains widely open, see~\cite{SS} for a recent survey.

The following proposition is well-known.
\begin{prop}
\[\phi^{\prime}(G;x)=\sum_{i=1}^n \phi(G_i;x).\]
\end{prop}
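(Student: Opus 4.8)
The plan is to treat this as a purely determinantal identity, since the graph-theoretic content is only that deleting the vertex $v_i$ removes the $i$-th row and column of $A=A(G)$; hence $\phi(G_i;x)$ is exactly the $(i,i)$ principal minor of $xI-A$, namely $\det\big((xI-A)_{\hat\imath\hat\imath}\big)$, where $\hat\imath$ indicates deletion of the index $i$. It therefore suffices to establish the matrix identity $\frac{d}{dx}\det(xI-A)=\sum_{i=1}^{n}\det\big((xI-A)_{\hat\imath\hat\imath}\big)$.

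My main approach would be to differentiate using multilinearity of the determinant. Writing $\phi(G;x)=\det(xI-A)$, the variable $x$ occurs exactly once in each of the $n$ rows, in the diagonal position $(i,i)$. Since the determinant is linear in each row, $\frac{d}{dx}$ distributes as a sum of $n$ determinants; in the $i$-th summand one differentiates only the $i$-th row, whose sole dependence on $x$ is the entry $x$ in column $i$, so that row becomes the standard basis covector $e_i^{\rm T}$. Expanding the resulting determinant along row $i$ then returns precisely the $(i,i)$ cofactor of $xI-A$.

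It then remains to identify this cofactor. The $(i,i)$ cofactor equals $(-1)^{i+i}\det\big((xI-A)_{\hat\imath\hat\imath}\big)=\det\big((xI-A)_{\hat\imath\hat\imath}\big)$, the sign being $+1$, and by the reduction above this is $\phi(G_i;x)$; summing over $i$ yields the proposition. Equivalently, one may invoke Jacobi's formula $\frac{d}{dx}\det M(x)=\Tr\!\big(\operatorname{adj} M(x)\,\frac{dM}{dx}\big)$ with $M=xI-A$, which gives $\phi'(G;x)=\Tr\big(\operatorname{adj}(xI-A)\big)=\sum_{i=1}^{n} C_{ii}$, and then identify the diagonal cofactors $C_{ii}$ as before. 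The argument is entirely routine; the only points to watch are the trivial bookkeeping that the cofactor sign is $+1$ and that the principal minor is genuinely $\phi(G_i;x)$, so I would not expect any substantive obstacle.
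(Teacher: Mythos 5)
Your proof is correct. The paper offers no proof of this proposition at all --- it is simply stated as well-known --- so there is nothing to compare against; your argument via multilinearity of the determinant in the rows (equivalently, Jacobi's formula $\frac{d}{dx}\det M=\Tr(\operatorname{adj}(M)\,M')$ applied to $M=xI-A$), combined with the identification of the $(i,i)$ principal minor of $xI-A(G)$ with $\phi(G_i;x)$, is the standard and complete justification.
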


It follows that all the coefficients (except for the constant term) of $\phi(G;x)$ are determined by $\mathcal{P}(G)$. Therefore, the main focus would be whether the constant term of $\phi(G;x)$ is reconstructible.


More recently, Spier~\cite{Sp} showed, among others, the following result for the polynomial reconstruction problem.

\begin{thm} [Spier~\cite{Sp}, Theorem 8]\label{Spier}
For every graph $G$ with $n\geq 3$, the constant term of $\phi(G;x)~({\rm mod}~2)$ is reconstructible from $\mathcal{P}(G)$. Moreover,
for an even $n$, the constant term of $\phi(G;x)~({\rm mod}~4)$ is reconstructible from $\mathcal{P}(G)$.
\end{thm}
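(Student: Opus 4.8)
The plan is to reduce the entire statement to a single closed-walk count taken modulo a power of $2$, and then to isolate the one ``global'' contribution that the polynomial deck cannot see. Write $\phi(G;x)=\sum_{k=0}^{n}a_kx^{n-k}$ with $a_0=1$. The identity $\phi'(G;x)=\sum_{i=1}^{n}\phi(G_i;x)$ shows that $a_1,\dots,a_{n-1}$ are reconstructible from $\mathcal P(G)$, so only the constant term $a_n$ is in doubt; equivalently the power sums $p_k=\Tr\!\big(A(G)^k\big)$ for $1\le k\le n-1$ are reconstructible. Newton's identity
\[
na_n=-\Big(p_n+\sum_{k=1}^{n-1}a_k\,p_{n-k}\Big)
\]
expresses $na_n$ through reconstructible data together with the single remaining unknown $p_n=\Tr\!\big(A(G)^n\big)$. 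Hence, for two graphs $G,H$ with $\mathcal P(G)=\mathcal P(H)$, one has $n\big(a_n(G)-a_n(H)\big)=-\big(p_n(G)-p_n(H)\big)$, so reconstructing $a_n$ modulo $2$ (resp.\ modulo $4$) is exactly the same as reconstructing $p_n$ modulo $2n$ (resp.\ modulo $4n$).

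Next I would split $p_n$ by the number of distinct vertices a closed walk of length $n$ visits. A closed walk of length $n$ meeting all $n$ vertices must have $v_0,\dots,v_{n-1}$ pairwise distinct, hence is a Hamiltonian cycle traversed once, and each Hamiltonian cycle produces exactly $2n$ such walks (a base point and an orientation). Writing $h(G)$ for the number of Hamiltonian cycles, this gives
\[
\Tr\!\big(A(G)^n\big)=2n\,h(G)+N(G),
\]
where $N(G)$ counts the closed walks of length $n$ confined to a proper induced subgraph. Since $2n\,h(G)\equiv 0\pmod{2n}$, we get $p_n\equiv N(G)\pmod{2n}$, and $N(G)$ should be reconstructible from $\mathcal P(G)$ by a Kelly-type inclusion--exclusion over the omitted vertices, feeding on the values $\Tr\!\big(A(G_i)^n\big)$ read off the deck. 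The delicate point is that the polynomial deck is strictly weaker than the vertex-deleted graph deck, so this recovery of $N(G)$ must be carried out purely at the level of characteristic polynomials; granting it, $p_n\bmod 2n$ is reconstructible and the first assertion follows for every $n\ge 3$.

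For the even-$n$ refinement the congruence modulo $2n$ no longer suffices: recovering $p_n$ modulo $4n$ forces one to control the parity $h(G)\bmod 2$, equivalently (since $a_n=\det A(G)$ when $n$ is even) to reconstruct $\det A(G)\bmod 4$, and this is the step I expect to be the crux. The natural tool is the new invariant, but with a genuine caveat: two graphs sharing a polynomial deck are in general \emph{not} cospectral (they differ precisely in $a_n$), so Theorem~\ref{equiv3} does not apply to the pair $G,H$ verbatim. What such graphs do share are the power sums $p_1,\dots,p_{n-1}$, and Theorem~\ref{equiv1} already asserts that the walk numbers $e^{\mathrm T}A(G)^m e$ are determined modulo $4$ by spectral data. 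The plan is therefore to revisit the mechanism behind Theorem~\ref{equiv1}, check that the mod-$4$ value of the relevant walk numbers depends only on $p_1,\dots,p_{n-1}$ (hence is reconstructible), and then transport this to $\det A(G)\bmod 4$ through the complement identity
\[
\phi(\bar G;x)=\det\!\big((x+1)I+A(G)-J\big)=\det\!\big((x+1)I+A(G)\big)-e^{\mathrm T}\,\mathrm{adj}\!\big((x+1)I+A(G)\big)\,e ,
\]
which ties $\det A(G)$ to exactly those walk numbers. Sachs' coefficient theorem then shows that, for even $n$, $\det A(G)\bmod 4$ is carried by the parities of the perfect-matching count and of the single-cycle-plus-matching count; forcing these to coincide for co-deck graphs is the final and hardest knot, and the restriction to even $n$ is precisely the requirement that perfect matchings exist.
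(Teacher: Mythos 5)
You should first be aware that this paper does not prove Theorem~\ref{Spier} at all: it is quoted from Spier~\cite{Sp}, whose proof in turn uses Theorem~\ref{equiv1}/Theorem~\ref{equiv3} of the present paper as an ingredient. So there is no in-paper argument to measure your proposal against; it has to stand on its own, and as written it does not.

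The reduction via $\phi'(G;x)=\sum_i\phi(G_i;x)$ and Newton's identity is sound, as is the decomposition $\Tr\big(A(G)^n\big)=2n\,h(G)+N(G)$. The first genuine gap is the claim that $N(G)$, the number of closed $n$-walks missing at least one vertex, ``should be reconstructible \dots by a Kelly-type inclusion--exclusion.'' That inclusion--exclusion needs $\Tr\big(A(G-U)^n\big)$ for \emph{all} nonempty $U\subseteq V$, while the polynomial deck only supplies single-vertex deletions. The only quantity you can actually read off is $\sum_i\Tr\big(A(G_i)^n\big)=\sum_{w}\big(n-|V(w)|\big)$, which counts each deficient closed walk $w$ with weight $n-|V(w)|$ rather than $1$; there is no evident way to strip these weights using characteristic polynomials alone, and you concede the point with ``granting it.'' The second gap is the entire mod-$4$ statement for even $n$, which you correctly identify as the crux (and you are right that Theorem~\ref{equiv3} cannot be applied verbatim, since two graphs with the same polynomial deck are in general not cospectral --- they differ exactly in the constant term), but for which you offer only a direction, not an argument. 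In short: the skeleton is plausible and the two hard points are correctly located, but neither is closed, so this is a proof plan rather than a proof.
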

\begin{rmk}
Theorem~\ref{Spier} provides non-trivial information about the constant term of $\phi(G;x)$, the proof of which is based on the results established in this paper, as mentioned by Spier~\cite{Sp} "\emph{A key component of the approach to Theorem 8 is the recent result by Ji, Tang, Wang and Zhang $\cdots$}".
\end{rmk}

The paper is organized as follows. In Section \ref{sec2}, we present some preliminary lemmas. In Section \ref{sec3}, we provide the proof of Theorem \ref{equiv1}. Finally, Section \ref{sec4} concludes with the proofs of Theorems~\ref{parity11} and~\ref{equiv3}.

\section{Notation and Preliminary Lemmas}\label{sec2}
In this section, we introduce some notation and prove some useful lemmas, which are essential in the proof of Theorem \ref{equiv1}.

For a simple graph $G=(V,E)$ with vertex set $V=\{v_1,\dots, v_n\}$ and edge set $E$, a \emph{walk} of length $m$ in $G$ is a sequence of vertices (not necessarily distinct), denoted in this paper by an $(m
+1)$-tuple $(v_1,v_2,\dots,v_m,v_{m+1})$, such that $v_{i-1}$ and $v_i$ are adjacent for $1\leq i\leq m$. A walk is called \emph{closed} if $v_1=v_{m+1}$.

Let $\mathcal{W}(m)$ be the set of walks of length $m$, i.e. \[\mathcal{W}(m)=\{\mathbf{w}=(w_1,\dots,w_m,w_{m+1}): w_kw_{k+1}\in E\}.\] It is easy to see that $ e^{\rm T}A^m e$ is exactly the cardinality of $\mathcal{W}(m)$. Let
\[\mathcal{C}(m)=\{\mathbf{c}=(c_1,\dots,c_m,c_{m+1})\in \mathcal{W}(m)\,:c_1=c_{m+1}\},\]
be the set of closed walks of length $m$. Then it is known that $\Tr(A^m)$ equals to the cardinality of $\mathcal{C}(m)$. For any $\mbfc=(c_1,\dots,c_m,c_1)\in \mathcal{C}(m)$ and $d\in\mbz$, we define the \emph{$d$-translation} of $\mbfc$ as
\[\mbfc^{+d}=(c_{1+d},\dots,c_{m+d},c_{m+d+1})\in \mathcal{C}(m),\]
where the indices are understood modulo $m$, and the \emph{converse} of $c$ is given by
\[\mbfc^{\conv}=(c_1,c_m,\dots,c_2,c_1)\in \mathcal{C}(m).\]
The notation is clarified in the figure below, where the rightmost point is consistently chosen as the starting point, and counterclockwise is considered the positive direction. The left figure represents  $\mbfc$, the middle figure shows the \( 2 \)-translation of $\mbfc$, and the right figure depicts the converse of $\mbfc$.

\begin{center}
\setlength{\unitlength}{1mm}
\begin{picture}(100,35)
\put(-10,20){\circle{25}}
\put(40,20){\circle{25}}
\put(90,20){\circle{25}}
\put(2.5,20){\circle*{1}}
\put(0.825,26.25){\circle*{1}}
\put(-3.75,30.825){\circle*{1}}
\put(-10,34){\circle*{0.5}}
\put(-17,32.12){\circle*{0.5}}
\put(-13.62,33.52){\circle*{0.5}}
\put(0.825,13.75){\circle*{1}}
\put(-3.75,9.175){\circle*{1}}
\put(3.5,19){$c_1$}
\put(1.8,25.5){$c_2$}
\put(-3,30.5){$c_3$}
\put(1.8,12.5){$c_{m}$}
\put(-3,8){$c_{m-1}$}
\put(-10,2){$\mbfc$}
\put(52.5,20){\circle*{1}}
\put(50.825,26.25){\circle*{1}}
\put(46.25,30.825){\circle*{1}}
\put(40,34){\circle*{0.5}}
\put(33,32.12){\circle*{0.5}}
\put(36.38,33.52){\circle*{0.5}}
\put(50.825,13.75){\circle*{1}}
\put(46.25,9.175){\circle*{1}}
\put(53.5,19){$c_3$}
\put(51.8,25.5){$c_4$}
\put(47,30.5){$c_5$}
\put(51.8,12.5){$c_{2}$}
\put(47,8){$c_{1}$}
\put(40,2){$\mbfc^{+2}$}
\put(102.5,20){\circle*{1}}
\put(100.825,26.25){\circle*{1}}
\put(96.25,30.825){\circle*{1}}
\put(90,34){\circle*{0.5}}
\put(83,32.12){\circle*{0.5}}
\put(86.38,33.52){\circle*{0.5}}
\put(100.825,13.75){\circle*{1}}
\put(96.25,9.175){\circle*{1}}
\put(103.5,19){$c_1$}
\put(101.8,25.5){$c_m$}
\put(97,30.5){$c_{m-1}$}
\put(101.8,12.5){$c_{2}$}
\put(97,8){$c_{3}$}
\put(90,2){$\mbfc^{\conv}$}
\qbezier(0,20)(0,25.77)(-5,28.66)
\put(-5,28.66){\line(2,-3){1}}
\put(-5,28.66){\line(3,-1){1.7}}
\qbezier(50,20)(50,25.77)(45,28.66)
\put(45,28.66){\line(2,-3){1}}
\put(45,28.66){\line(3,-1){1.7}}
\qbezier(100,20)(100,25.77)(95,28.66)
\put(95,28.66){\line(2,-3){1}}
\put(95,28.66){\line(3,-1){1.7}}
\put(20,-2){\text{The }$2$\text{-translation and converse of }$\mbfc$}
\end{picture}\\
\end{center}

It is easy to see that
\begin{equation}\label{eq:cconv}
    (\mbfc^{\conv})^{\conv}=\mbfc,\qquad (\mbfc^{+d})^{\conv}=(\mbfc^{\conv})^{+(m-d)}.
\end{equation}

For any $\mbfc\in \mathcal{C}(m)$, we define $\bar{\mbfc}:=\{\mbfa\in \mathcal{C}(m)\,:\mbfa=\mbfc^{+k} \text{ for some } k\in \mbz\}$. This is precisely the set of all translations of $\mbfc$.



The following lemma is fundamental and will be used frequently in the subsequent analysis.

\begin{lem}\label{lem:2k1}
    Let \( A \) be an integral symmetric matrix with even entries on the diagonal. Then, for any positive integer \( k \), the diagonal entries of \( A^{2k+1} \) are all even numbers.
\end{lem}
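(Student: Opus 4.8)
The plan is to interpret the diagonal entry $(A^{2k+1})_{ii}$ combinatorially and then exploit the converse involution together with the parity of the diagonal entries. For a walk $\mbfw=(w_1,\dots,w_m,w_{m+1})$ of length $m=2k+1$, assign the weight $\mathrm{wt}(\mbfw):=\prod_{j=1}^{m}a_{w_jw_{j+1}}$; then $(A^{2k+1})_{ii}$ equals the sum of $\mathrm{wt}(\mbfw)$ over all closed walks $\mbfw$ of length $2k+1$ with $w_1=w_{m+1}=i$. I will show this sum is even by splitting the closed walks into those fixed by the converse operation and those that are not.

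First I would dispose of the walks not fixed by the converse. Since $A$ is symmetric, $a_{w_jw_{j+1}}=a_{w_{j+1}w_j}$, so a closed walk and its converse $\mbfw^{\conv}$ always carry the same weight. Because $(\mbfw^{\conv})^{\conv}=\mbfw$, the map $\mbfw\mapsto\mbfw^{\conv}$ is an involution on the set of closed walks of length $2k+1$ based at $i$, and the walks with $\mbfw\neq\mbfw^{\conv}$ are thereby partitioned into two-element orbits, each contributing $2\,\mathrm{wt}(\mbfw)\in 2\mbz$ to the total. Hence modulo $2$ only the self-converse (palindromic) walks survive.

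The key step is to analyze a palindromic walk, i.e. one with $\mbfw=\mbfw^{\conv}$, which forces $w_j=w_{m+2-j}$ for all $j$. Since $m=2k+1$ is odd, the index pair $(k+1,k+2)$ is interchanged by $j\mapsto m+2-j$, so the palindrome condition yields $w_{k+1}=w_{k+2}$. Consequently the central step of the walk is a loop at the vertex $w_{k+1}$, and its weight contains the diagonal factor $a_{w_{k+1}w_{k+1}}$. By hypothesis every diagonal entry of $A$ is even, so $\mathrm{wt}(\mbfw)$ is even for every palindromic walk. Adding up, $(A^{2k+1})_{ii}$ is a sum of even contributions and is therefore even, as claimed.

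I expect the main obstacle to be purely bookkeeping: pinning down that an odd-length palindromic closed walk is forced to repeat a vertex exactly in the middle (so that the diagonal hypothesis can be invoked), and verifying that the converse pairing is genuinely fixed-point-free off the palindromic set. Once the central-loop phenomenon is isolated, the parity count is immediate, and the same dichotomy (converse pairing plus a forced even central factor) is precisely what makes the oddness of the exponent essential: for even $m$ there is no single central step and the argument would break.
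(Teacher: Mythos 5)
Your proof is correct and rests on the same two observations as the paper's: pairing each closed walk based at $i$ with its converse (an involution that preserves the weight because $A$ is symmetric), and noting that a self-converse closed walk of odd length $2k+1$ is forced to satisfy $w_{k+1}=w_{k+2}$, so that its contribution involves a diagonal entry. The only divergence is in how the general integral matrix is treated: the paper first proves the claim for adjacency matrices of simple graphs, where the condition $w_{k+1}=w_{k+2}$ makes palindromic walks outright impossible, and then reduces the general case modulo $2$ by writing $A=A_1+2A_2$; you instead keep the general matrix throughout by weighting each walk with $\prod_j a_{w_jw_{j+1}}$ and using the even diagonal entry $a_{w_{k+1}w_{k+1}}$ to make each palindromic term even, which is a slightly more direct route to the same conclusion.
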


\begin{proof}
    We first assume that $A$ is an adjacency matrix of a simple graph. It is well-known that the $(i,i)$-th entry of $A^{2k+1}$ is the number of closed walks from the vertex $v_i$ to $v_i$ itself of length $2k+1$, i.e.
    \[|\{\mbfc=(c_1,\dots,c_{2k+1},c_1)\in \mathcal{C}(2k+1)\,: c_1=v_i\}|.\]
    Note that for such a walk $\mbfc$, $\mbfc^{\conv}$ is also a closed walk from $v_i$ to $v_i$ and $\mbfc$ can never equal to $\mbfc^{\conv}$. Otherwise, if $\mbfc=(c_1,\dots,c_{2k+1},c_1)$ such that $\mbfc=\mbfc^{\conv}$, then we have $c_i=c_{2k+3-i}$ for $i=2,\dots,2k+1$. In particular, we have $c_{k+1}=c_{k+2}$. However, this is impossible since $G$ is a simple graph without loops. This shows that the number of such walks is even.

    Generally, let $A=A_1+2A_2$ where $A_1$ is an adjacency matrix of a simple graph and $A_2$ is an integral matrix. Note that $A^{2k+1}\equiv A_1^{2k+1}\pmod{2}$. So it follows immediately from the special case.
\end{proof}

The next two lemmas examine the conditions under which a walk $\mbfc \in \mathcal{C}(m)$ coincides with its converse. This distinction is crucial, as walks that are not identical to their converse are counted twice, while those satisfying $\mbfc = \mbfc^{\conv}$ contribute only once.

\begin{lem}\label{lem:akconv}
    Let $\mbfc\in \mathcal{C}(m)$ with $|\bar{\mbfc}|=m$ and $4\,|\, m$, then $\bar{\mbfc}=\ol{\mbfc^{\conv}}$ if and only if $(\mbfc^{+k})^{\conv}=\mbfc^{+k}$ for some $k$. Moreover, there are exactly two choices of such $k$ in the sense of modulo $m$.
\end{lem}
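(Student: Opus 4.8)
The plan is to translate the whole statement into the language of the dihedral action generated by translation and converse on cyclic walks, after which only one genuinely combinatorial fact---the absence of loops---remains to be exploited. I would regard a walk $\mbfc=(c_1,\dots,c_m,c_1)\in\mathcal C(m)$ as a map $i\mapsto c_i$ on $\mbz/m\mbz$, so that $(\mbfc^{+d})_i=c_{i+d}$ and, unwinding the definition of the converse, $(\mbfc^{\conv})_i=c_{2-i}$ (indices mod $m$). The relations in \eqref{eq:cconv} then say precisely that translation $T^d:=(\,\cdot\,)^{+d}$ and converse $R:=(\,\cdot\,)^{\conv}$ generate a dihedral action, with $R^2=1$ and $RT^d=T^{-d}R$. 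The hypothesis $|\bar{\mbfc}|=m$ means exactly that the cyclic group $\langle T\rangle$ acts freely on $\mbfc$, i.e. $\mbfc^{+a}=\mbfc^{+b}$ holds if and only if $a\equiv b\pmod m$; this freeness is what renders all subsequent index computations unambiguous.

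Granting this, the backward implication is immediate: if $(\mbfc^{+k})^{\conv}=\mbfc^{+k}$ for some $k$, then the walk $\mbfc^{+k}$ lies in $\bar{\mbfc}$ and, being its own converse, also lies in $\ol{\mbfc^{\conv}}$, since by \eqref{eq:cconv} the converse of any translate of $\mbfc$ is a translate of $\mbfc^{\conv}$. As two translation-orbits are either equal or disjoint, $\bar{\mbfc}=\ol{\mbfc^{\conv}}$. For the forward implication, $\bar{\mbfc}=\ol{\mbfc^{\conv}}$ forces $\mbfc^{\conv}\in\bar{\mbfc}$, say $\mbfc^{\conv}=\mbfc^{+j}$. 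Then \eqref{eq:cconv} gives $(\mbfc^{+k})^{\conv}=(\mbfc^{\conv})^{+(m-k)}=\mbfc^{+(j-k)}$, and by freeness this equals $\mbfc^{+k}$ precisely when $2k\equiv j\pmod m$. Such a $k$ exists if and only if $j$ is even, in which case (as $m$ is even) there are exactly $\gcd(2,m)=2$ solutions modulo $m$, namely a particular solution $k_0$ together with $k_0+m/2$. This already yields the ``moreover'' clause, so the entire lemma reduces to proving that $j$ must be even.

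The crux---and the step I expect to be the main obstacle---is exactly this parity claim, and it is where simplicity of the graph (no loops) enters. Reading $\mbfc^{\conv}=\mbfc^{+j}$ coordinatewise gives the reflection symmetry $c_i=c_{(2+j)-i}$ for all $i$. Suppose, for contradiction, that $j$ is odd. Then $1+j$ is even, so (again using that $m$ is even) the congruence $2i\equiv 1+j\pmod m$ admits a solution $i_0$. For this index the reflection sends $i_0$ to $(2+j)-i_0=i_0+1$, whence $c_{i_0}=c_{i_0+1}$; but consecutive vertices of a walk in a simple graph are adjacent and hence distinct, a contradiction. Therefore $j$ is even, and combining this with the counting above completes the proof. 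I would note that only the evenness of $m$ is actually invoked, the stronger hypothesis $4\mid m$ being the regime relevant to the intended application; the single point demanding care is that the two admissible shifts differ by $m/2$, which is what makes them distinct modulo $m$.
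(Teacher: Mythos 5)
Your proof is correct and follows essentially the same route as the paper: both reduce the forward implication to writing $\mbfc^{\conv}=\mbfc^{+j}$, rule out odd $j$ by locating an index where the reflection would force two consecutive (hence adjacent, hence distinct) vertices of the walk to coincide, and then count the solutions of $2k\equiv j\pmod m$ to get exactly two admissible shifts. Your dihedral/freeness packaging makes the counting slightly cleaner than the paper's explicit exhibition of $k=i/2$ and $k=(m+i)/2$, and your observation that only $2\mid m$ is needed is accurate, but the underlying argument is the same.
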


\begin{proof}
    If $(\mbfc^{+k})^{\conv}=\mbfc^{+k}$ for some $k$, then for any $\ell$, by using the relation \eqref{eq:cconv}, we have
    \[\mbfc^{+\ell}=\left(\left(\mbfc^{+k}\right)^{\conv}\right)^{+(\ell-k)}=\left(\left(\mbfc^{\conv}\right)^{+(m-k)}\right)^{+(\ell-k)}=\left(\mbfc^{\conv}\right)^{+(m+\ell-2k)}\in \ol{\mbfc^{\conv}}.\]
    This shows that $\bar{\mbfc}\subseteq \ol{\mbfc^{\conv}}$. It is similar to show that $\ol{\mbfc^{\conv}}\subseteq \bar{\mbfc}$.

    Conversely, if $\bar{\mbfc}=\ol{\mbfc^{\conv}}$, then there exists an $i$ with $0\leq i<m$ such that $\mbfc^{+i}=\mbfc^{\conv}$. We see that $i$ must be even. In fact, if $i$ is odd, then suppose $\mbfc=(c_1,c_2,\dots,c_m,c_1)$, then from $\mbfc^{+i}=\mbfc^{\conv}$, we have
    \[(c_{i+1},c_{i+2},c_{i+3},\dots,c_{i},c_{i+1})=(c_1,c_m,c_{m-1},\dots,c_2,c_1).\]
    This gives $c_{(m+i+1)/2}=c_{(m+i+3)/2}$, but this is impossible since $G$ is a simple graph without loops. Then by combining the relation \eqref{eq:cconv}, we have
    \[(\mbfc^{+i/2})^{\conv}=(\mbfc^{\conv})^{+(m-i/2)}=(\mbfc^{+i})^{+(m-i/2)}=\mbfc^{+i/2},\]
    and
    \[(\mbfc^{+(m+i)/2})^{\conv}=(\mbfc^{\conv})^{+(m-(m+i)/2)}=(\mbfc^{+i})^{+((m-i)/2)}=\mbfc^{+(m+i)/2}.\]
    If there exists another $k$ such that $(\mbfc^{+k})^{\conv}=\mbfc^{+k}$, then by the same argument as above, we have $\mbfc^{\conv}=\mbfc^{+2k}$. This gives $\mbfc^{+(2k-i)}=\mbfc$. Since $m$ is the smallest positive integer such that $\mbfc^{+m}=\mbfc$, so $2k-i$ is divisible by $m$. Thus $k=i/2$ or $(m+i)/2$ in the sense of modulo $m$.

\end{proof}

\begin{exa}
We provide an example to further elucidate this lemma in a more concrete manner. Let $G=C_4$ be $4$-cycle indexed clockwise  as $v_1, v_2,v_3$ and $v_4$. Let $\mbfc=(v_1,v_2,v_3,v_4,v_3,v_2,v_1,v_4,v_1)$ be a closed walk of length $m=8$, then for $i=3$, we have
\[\mbfc^{+3}=(v_4,v_3,v_2,v_1,v_4,v_1,v_2,v_3,v_4)=(\mbfc^{+3})^{\conv}.\]
Similarly, we also have $\mbfc^{+7}=(\mbfc^{+7})^{\conv}$.
\end{exa}


\begin{lem}\label{lem:wn2}
    Let $m$ be an even integer. Then
    \[
    |\{\mbfc \in \mathcal{C}(m) \, : \, \mbfc^{\conv} = \mbfc\}| = |\mathcal{W}(m/2)|.
    \]
\end{lem}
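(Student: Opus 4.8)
The plan is to set up an explicit bijection between the palindromic closed walks $\{\mbfc \in \mathcal{C}(m) : \mbfc^{\conv} = \mbfc\}$ and the set $\mathcal{W}(m/2)$ of walks of length $m/2$. First I would unwind the condition $\mbfc^{\conv} = \mbfc$ coordinatewise. Writing $\mbfc = (c_1, \dots, c_m, c_1)$ and comparing with $\mbfc^{\conv} = (c_1, c_m, \dots, c_2, c_1)$, whose entry in position $p$ is $c_{m+2-p}$, the equality $\mbfc = \mbfc^{\conv}$ is equivalent to the reflection symmetry $c_j = c_{m+2-j}$ for all $j$ (indices read modulo $m$). Setting $k := m/2$, the map $j \mapsto m+2-j$ fixes exactly the two indices $j=1$ and $j=k+1$, while the remaining indices pair up as $\{2, m\}, \{3, m-1\}, \dots, \{k, k+2\}$. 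Consequently a palindromic closed walk is completely determined by its initial segment $(c_1, c_2, \dots, c_{k+1})$, since the later coordinates are forced by $c_{m+2-j} = c_j$.

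Next I would define the restriction map $\Phi(\mbfc) = (c_1, c_2, \dots, c_{k+1})$ and check it lands in $\mathcal{W}(m/2)$: the edge conditions $c_1 c_2, c_2 c_3, \dots, c_k c_{k+1} \in E$ inherited from $\mbfc$ are precisely the requirement that $(c_1, \dots, c_{k+1})$ be a walk of length $k = m/2$. For the inverse, given $(w_1, \dots, w_{k+1}) \in \mathcal{W}(m/2)$, I would set $c_j = w_j$ for $1 \le j \le k+1$ and $c_j = w_{m+2-j}$ for $k+2 \le j \le m$, producing the sequence $w_1, w_2, \dots, w_{k+1}, w_k, w_{k-1}, \dots, w_2$ that closes back to $w_1$. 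The crux is to verify that this is a genuine element of $\mathcal{C}(m)$, i.e. that all $m$ consecutive pairs are edges.

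The main obstacle, and really the only nontrivial point, is checking the two "boundary" edges created at the fixed positions. The first-half edges $(c_j, c_{j+1}) = (w_j, w_{j+1})$ for $1 \le j \le k$ are edges by hypothesis. Each second-half edge $(c_j, c_{j+1})$ with $k+1 \le j \le m-1$ equals $(w_{m+2-j}, w_{m+1-j})$, the reverse of a first-half edge and hence an edge; in particular the turning edge $(c_{k+1}, c_{k+2}) = (w_{k+1}, w_k)$ is fine. Finally the wrap-around edge $(c_m, c_1) = (w_2, w_1)$ mirrors the edge $(w_1, w_2)$. Thus every edge is valid, and by construction the resulting walk satisfies $c_{m+2-j} = c_j$, so it is palindromic. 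Since $\Phi$ and this extension are mutually inverse, they give the desired bijection and hence $|\{\mbfc \in \mathcal{C}(m) : \mbfc^{\conv} = \mbfc\}| = |\mathcal{W}(m/2)|$. The evenness of $m$ enters exactly here: it guarantees the second fixed point at $k+1$, so that the determining segment ends at a well-defined vertex $c_{k+1}$ carrying no palindrome constraint, which is what makes the target a plain walk of length $m/2$.
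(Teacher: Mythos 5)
Your proof is correct and follows essentially the same route as the paper: the palindrome condition $c_j = c_{m+2-j}$ shows a symmetric closed walk is determined by its initial segment $(c_1,\dots,c_{m/2+1})$, which is a walk of length $m/2$. You simply spell out the bijection and the edge-verification for the inverse map, which the paper leaves implicit with ``the result follows directly.''
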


\begin{proof}
    Let $\mbfc=(c_1,c_2,\dots,c_m,c_1)$, since $\mbfc^{\conv}=\mbfc$, we see that $c_i=c_{m+2-i}$ for $i=2,3,\dots,m/2$. This shows that $\mbfc$ is totally determined by $c_1,c_2,\dots,c_{m/2+1}$. Then the result follows directly.
\end{proof}

\begin{lem}[\cite{WX1}, Lemma 4.2]\label{even1}
Let $G$ be a graph with adjacency matrix $A(G)$. Then $e^{\rm T}A(G)^me$ is even for any integer $m\geq 1$.
\end{lem}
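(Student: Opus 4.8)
The plan is to interpret the quantity combinatorially and exploit the converse involution already introduced for closed walks. As noted in the text, $e^{\rm T}A(G)^m e = |\mathcal{W}(m)|$ counts walks of length $m$. The map $\mathbf{w}\mapsto \mathbf{w}^{\conv}$ sending a walk $(w_1,\dots,w_{m+1})$ to its reversal $(w_{m+1},\dots,w_1)$ is an involution on $\mathcal{W}(m)$, so $|\mathcal{W}(m)|$ is congruent modulo $2$ to the number of its fixed points, i.e. the number of \emph{palindromic} walks satisfying $w_i=w_{m+2-i}$ for all $i$. The whole argument then reduces to counting these fixed points modulo $2$.

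First I would treat odd $m$. Here $m+1$ is even, so the index pairing $i\leftrightarrow m+2-i$ has no fixed index, and in particular it forces $w_{(m+1)/2}=w_{(m+3)/2}$. Since these are consecutive vertices of the walk they must be adjacent, which is impossible in a simple graph without loops. Hence there are no palindromic walks and $|\mathcal{W}(m)|$ is even.

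Next I would treat even $m=2t$. A palindromic walk is determined by its first half $(w_1,\dots,w_{t+1})$, which is an arbitrary walk of length $t$, and conversely every such half extends uniquely to a palindromic walk of length $2t$ by reflection; this is exactly the open-walk analogue of Lemma~\ref{lem:wn2}. Therefore the number of fixed points equals $|\mathcal{W}(t)|$, giving the recursion $|\mathcal{W}(2t)|\equiv |\mathcal{W}(t)|\pmod 2$. Combining the two cases, a straightforward (strong) induction on $m$ finishes the proof: for odd $m$ the count is even directly, while for even $m$ it equals $|\mathcal{W}(m/2)|$ modulo $2$, which is even by the inductive hypothesis; the base case $m=1$ gives $e^{\rm T}A(G)e=2|E|$, which is even.

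The only real obstacle is the even case, where the reversal involution genuinely has fixed points, so one cannot conclude evenness in a single step; the point is that the fixed-point set is itself a copy of $\mathcal{W}(m/2)$, which lets the parity descend. As an alternative I would note a purely linear-algebraic proof of the same facts: writing $y=A^{(m-1)/2}e$ for odd $m$ gives $e^{\rm T}A^m e = y^{\rm T}Ay=2\sum_{i<j}a_{ij}y_iy_j$, which is even because $A$ is symmetric with zero diagonal; and for even $m$ one has $e^{\rm T}A^m e=\|A^{m/2}e\|^2\equiv e^{\rm T}A^{m/2}e\pmod 2$ since $y_i^2\equiv y_i\pmod 2$, yielding the same recursion.
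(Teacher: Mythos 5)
Your proof is correct. Note that the paper does not prove Lemma~\ref{even1} at all---it is imported verbatim from Wang--Xu \cite{WX1}---so there is no in-paper argument to match against; what you have supplied is a valid self-contained proof, and in fact two of them. Your combinatorial route (reversal involution on \emph{open} walks, no fixed points when $m$ is odd because a palindrome would force two consecutive equal vertices, and fixed points biject with $\mathcal{W}(m/2)$ when $m$ is even) is exactly the open-walk analogue of the machinery the paper builds for closed walks in Lemmas~\ref{lem:2k1} and~\ref{lem:wn2}, so it fits the paper's framework naturally; the descent $|\mathcal{W}(2t)|\equiv|\mathcal{W}(t)|\pmod 2$ plus the odd case closes the induction cleanly. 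Your linear-algebraic alternative ($y^{\rm T}Ay\equiv 0$ for integral $y$ when $A$ is symmetric with zero diagonal, and $\|y\|^2\equiv e^{\rm T}y\pmod 2$) is the standard argument and is essentially what appears in \cite{WX1}; it is shorter and generalizes immediately to any integral symmetric matrix with even diagonal, whereas the combinatorial version gives the structural insight (palindromic walks) that the rest of the paper exploits at the level of residues modulo $4$. Either version would serve as a complete proof here.
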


\begin{lem}\label{lem:trabeven}
    Let $A$ and $B$ be two $n \times n$ symmetric integral matrices and the diagonal entries of $A$ are even. Then $\Tr(AB)$ is even.
\end{lem}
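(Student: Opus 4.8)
The plan is to expand the trace as a double sum over matrix entries and then exploit the symmetry of both matrices to organize the terms so that the parity becomes manifest. Writing $A=(a_{ij})$ and $B=(b_{ij})$, I would begin from the standard identity
\[\Tr(AB)=\sum_{i=1}^n(AB)_{ii}=\sum_{i=1}^n\sum_{j=1}^n a_{ij}b_{ji}.\]
Because $B$ is symmetric we may replace $b_{ji}$ by $b_{ij}$, so that $\Tr(AB)=\sum_{i,j}a_{ij}b_{ij}$. This rewriting is the one slightly delicate point: it is the symmetry of $B$ that lets me convert the ``crossed'' index $b_{ji}$ into $b_{ij}$ before splitting the sum.

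Next I would separate the sum into its diagonal and off-diagonal contributions,
\[\Tr(AB)=\sum_{i=1}^n a_{ii}b_{ii}+\sum_{i\neq j}a_{ij}b_{ij}.\]
The diagonal sum is even term-by-term: by hypothesis each $a_{ii}$ is even, hence each product $a_{ii}b_{ii}$ is even, and so is their sum. For the off-diagonal sum, I would pair the term indexed by $(i,j)$ with the term indexed by $(j,i)$ for $i\neq j$. Using the symmetry of $A$ and of $B$ simultaneously gives $a_{ji}b_{ji}=a_{ij}b_{ij}$, so these two terms are equal and
\[\sum_{i\neq j}a_{ij}b_{ij}=2\sum_{i<j}a_{ij}b_{ij},\]
which is even. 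Adding the two even contributions shows that $\Tr(AB)$ is even.

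Since the whole argument is a bookkeeping of parities, I do not anticipate any genuine obstacle. The only things to be careful about are to invoke the symmetry of $B$ first, in order to pass from $b_{ji}$ to $b_{ij}$, and then to use the symmetry of \emph{both} matrices when pairing the off-diagonal terms; dropping either hypothesis would break the pairing. The evenness of the diagonal entries of $A$ is exactly what is needed to dispose of the diagonal contribution, which is the only part not automatically paired off.
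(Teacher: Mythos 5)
Your proof is correct and is essentially the same as the paper's: both expand $\Tr(AB)=\sum_{i,j}a_{ij}b_{ji}$, discard the diagonal terms using the evenness of $a_{ii}$, and pair the off-diagonal terms $(i,j)$ with $(j,i)$ via the symmetry of $A$ and $B$ to get a factor of $2$. Your preliminary replacement of $b_{ji}$ by $b_{ij}$ is a cosmetic difference only.
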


\begin{proof}
    Let $A=(a_{ij}),B=(b_{ij})$, then
    \[\Tr(AB)=\sum_{i=1}^n\sum_{j=1}^na_{ij}b_{ji}\equiv \sum_{i\neq j}a_{ij}b_{ji}=2\sum_{i<j}a_{ij}b_{ji}\equiv 0\pmod{2}.\]
\end{proof}

\begin{lem}\label{lem:a1a22}
    Let $A_1$ and $A_2$ be two symmetric integral matrices and $\ell$ be a positive even integer. Then we have $ e ^{\rm T}(A_1+2A_2)^{\ell} e \equiv  e ^{\rm T}A_1^{\ell} e  \pmod{4}$.
\end{lem}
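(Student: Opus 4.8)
The plan is to expand the non-commutative power $(A_1+2A_2)^{\ell}$ and reduce it modulo $4$. Each monomial in the expansion is an ordered product of $\ell$ factors, each factor being either $A_1$ or $2A_2$. Any monomial containing two or more factors of $2A_2$ carries a factor of $4$ and therefore vanishes modulo $4$, so only the monomials with zero or one factor of $2A_2$ survive. Collecting these gives, modulo $4$,
\[(A_1+2A_2)^{\ell}\equiv A_1^{\ell}+2\sum_{k=0}^{\ell-1}A_1^{k}A_2A_1^{\ell-1-k}\pmod 4,\]
where the single surviving $2A_2$ occupies each of the $\ell$ possible positions in turn. Multiplying on the left by $e^{\rm T}$ and on the right by $e$, the task reduces to showing that the integer $S:=\sum_{k=0}^{\ell-1}e^{\rm T}A_1^{k}A_2A_1^{\ell-1-k}e$ is even, since then $2S\equiv 0\pmod 4$.

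To prove $S$ is even I would exploit the symmetry of $A_1$ and $A_2$ together with the scalar nature of each summand. Because a $1\times 1$ quantity equals its own transpose, the $k$-th summand satisfies $e^{\rm T}A_1^{k}A_2A_1^{\ell-1-k}e=e^{\rm T}A_1^{\ell-1-k}A_2A_1^{k}e$, which is precisely the $(\ell-1-k)$-th summand. Thus the index involution $k\mapsto \ell-1-k$ on $\{0,1,\dots,\ell-1\}$ identifies equal summands. A fixed point of this involution would require $2k=\ell-1$, which is impossible: since $\ell$ is even, $\ell-1$ is odd, and an odd number cannot be twice an integer. Hence the involution is fixed-point-free, partitioning the $\ell$ summands into $\ell/2$ pairs of equal values, so $S$ is twice an integer and therefore even.

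I expect the only real subtlety to be the role of the hypothesis that $\ell$ is even, which is exactly what makes the pairing $k\leftrightarrow\ell-1-k$ free of fixed points; for odd $\ell$ the middle index $k=(\ell-1)/2$ would be unpaired and the parity argument would collapse. Everything else—the truncation of the binomial-type expansion modulo $4$ and the transpose identity for scalars—is routine bookkeeping, so the symmetry-driven fixed-point-free pairing is the heart of the argument.
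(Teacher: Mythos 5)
Your proposal is correct and follows essentially the same route as the paper: expand $(A_1+2A_2)^{\ell}$, discard all terms containing at least two factors of $2A_2$, and pair the index $k$ with $\ell-1-k$ using the symmetry of $A_1$ and $A_2$ (the paper phrases the pairing via the symmetric/antisymmetric decomposition of $A_1^kA_2A_1^{\ell-1-k}\pm A_1^{\ell-1-k}A_2A_1^{k}$, while you use the equivalent scalar-transpose identity). Your explicit remark that evenness of $\ell$ makes the involution fixed-point-free is exactly the point the paper uses implicitly, so the argument is complete.
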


\begin{proof}
    By expanding $(A_1+2A_2)^{\ell}$ directly, for each term, if it contains at least two $2A_2$, then it contributes $0\pmod{4}$. So it is enough to prove
    \begin{equation}\label{eq:A1A2}
         e ^{\rm T}\left(A_1^{\ell-1}A_2+A_1^{\ell-2}A_2A_1+\dots+A_2A_1^{\ell-1}\right) e \equiv 0\pmod{2}.
    \end{equation}
    Since $A_1$ and $A_2$ are symmetric, so for any $k$, $A_1^kA_2A_1^{\ell-k-1}+A_1^{\ell-k-1}A_2A_1^{k}$ is symmetric and $A_1^kA_2A_1^{\ell-k-1}-A_1^{\ell-k-1}A_2A_1^{k}$ is antisymmetric. This implies that the diagonal elements of $A_1^kA_2A_1^{\ell-k-1}+A_1^{\ell-k-1}A_2A_1^{k}$ are even, so the equality \eqref{eq:A1A2} follows immediately.
\end{proof}

When it comes to congruence relations between the traces of matrix powers, we must mention the Euler congruence \cite{Zarelua2008}, an important phenomenon in mathematics, which states that
\[
\Tr(A^{p^r}) \equiv \Tr(A^{p^{r-1}}) \pmod{p^r},
\]
for all integral matrices \( A \), all primes \( p \), and all \( r \in \mathbb{Z} \). The lemma we will prove below is related to this result but applies specifically to symmetric matrices with additional restrictions on their diagonal entries.

\begin{lem}\label{lem:trace2t}
    Let $A_1$ and $A_2$ be two symmetric integral matrices and the diagonal entries of $A_1$ are even numbers. Let $t\geq 2$ be a positive integer, then we have
    \begin{equation}\label{eq:expanofa}
        \Tr(A_1+2A_2)^{2^t}\equiv \Tr(A_1^{2^t})\pmod{2^{t+2}}.
    \end{equation}
\end{lem}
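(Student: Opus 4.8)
The plan is to expand $(A_1+2A_2)^{2^t}$ into a sum of $2^{2^t}$ ordered monomials indexed by words $w\in\{A_1,A_2\}^{2^t}$, pull out the powers of $2$, and take traces. Writing $n=2^t$, letting $j(w)$ be the number of $A_2$-letters in $w$, and $M_w$ the corresponding ordered product, one obtains
\[\Tr(A_1+2A_2)^{n}=\sum_{w}2^{j(w)}\Tr(M_w),\]
where the single word $w=A_1^{n}$ produces the term $\Tr(A_1^{n})$. It then remains to show that the combined contribution of all words containing at least one $A_2$ is divisible by $2^{t+2}$.

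The key device is the cyclic invariance of the trace: if $w'$ is a cyclic rotation of $w$, then $M_{w'}$ is a cyclic permutation of the product $M_w$, so $\Tr(M_{w'})=\Tr(M_w)$ and $j(w')=j(w)$. I would therefore group the words into orbits under the cyclic group $C_n$. Because $n=2^t$ is a prime power, every orbit has size $2^s$ for some $0\le s\le t$, with $s=t$ exactly when $w$ is aperiodic and $s<t$ exactly when $w=u^{2^{t-s}}$ for a primitive block $u$ of length $2^s$. An orbit of size $2^s$ on which the common number of $A_2$'s is $j$ contributes $2^{s+j}\Tr(M_w)$. Hence every orbit with $s+j\ge t+2$ is harmless modulo $2^{t+2}$, and the whole problem collapses to the orbits with $s+j\le t+1$.

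The heart of the argument is that, thanks to $n$ being a power of $2$, only two families of dangerous orbits survive. For an aperiodic word ($s=t$) one needs $j\le1$, forcing $j=1$; for a periodic word $w=u^{2^{t-s}}$ one has $j=2^{t-s}j_u$ with $j_u\ge1$, and $s+j\le t+1$ forces $t-s=1$ and $j_u=1$, i.e. $(s,j)=(t-1,2)$. (Setting $r=t-s$, this is exactly the elementary inequality $2^{r}\le r+1$, valid only for $r\le1$.) In each case I would exhibit the missing factor of $2$ by showing the relevant trace is even. For $(s,j)=(t,1)$ the word is a single $A_2$ inserted among $A_1$'s, so cyclic invariance gives trace $\Tr(A_1^{\,n-1}A_2)$; since $n-1$ is odd, $A_1^{\,n-1}$ has even diagonal by Lemma~\ref{lem:2k1}, whence $\Tr(A_1^{\,n-1}A_2)$ is even by Lemma~\ref{lem:trabeven}. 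For $(s,j)=(t-1,2)$ one has $w=u^2$ with $M_u=A_1^{a}A_2A_1^{b}$, $a+b=2^{t-1}-1$ odd, so cyclic invariance gives $\Tr(M_w)=\Tr\big((A_1^{a+b}A_2)^2\big)$; writing $C=A_1^{a+b}A_2$ and using $\Tr(C^2)=\sum_i C_{ii}^2+2\sum_{i<j}C_{ij}C_{ji}\equiv\Tr(C)\pmod 2$ reduces evenness again to that of $\Tr(A_1^{\,\mathrm{odd}}A_2)$, handled as before (the exponent-one case being the hypothesis on $A_1$).

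The main obstacle — and the step deserving the most care — is the bookkeeping in the last two paragraphs: verifying that orbit sizes are powers of $2$, that only $(s,j)\in\{(t,1),(t-1,2)\}$ fail to be automatically divisible by $2^{t+2}$, and that in each surviving case cyclic invariance reduces the trace to the form $\Tr(A_1^{\,\mathrm{odd}}A_2)$, whose evenness is precisely what Lemmas~\ref{lem:2k1} and~\ref{lem:trabeven} deliver. The prime-power structure of $n=2^t$ is essential here: it is exactly what forces the a priori complicated orbit sum down to these two boundary cases.
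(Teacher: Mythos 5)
Your proof is correct and follows essentially the same route as the paper's: expand $(A_1+2A_2)^{2^t}$ into ordered words, group them into cyclic orbits (whose sizes are powers of $2$ because $n=2^t$), and supply the missing factor of $2$ in the borderline cases via Lemmas~\ref{lem:2k1} and~\ref{lem:trabeven}. The only difference is organizational: your inequality $s+j\ge t+2$ isolates the two exceptional orbit types $(t,1)$ and $(t-1,2)$ in one stroke and treats $t=2$ uniformly, whereas the paper cases on the number of $A_2$'s and handles $t=2$ by a separate direct computation.
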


\begin{proof}

We distinguish the following two cases.

\

\noindent
\textbf{Case 1.} $t\geq 3$.

\

By expanding $(A_1+2A_2)^{2^t}$, we get
    \begin{equation}\label{eq11}(A_1+2A_2)^{2^t}=\sum_{B_1,B_2,\dots,B_{2^t}\in \{A_1,A_2\}}2^{|\{i\,|\,B_i=A_2\}|}B_1B_2\dotsm B_{2^t}.\end{equation}
    If $B_i=A_1$ for all $i$, then it contributes the term $A_1^{2^t}$. So we only need to consider the case $B_i=A_2$ for some $i$. Fix a product $B_1B_2\dotsm B_{2^t}$, we first assume that $A_2$ occurs at least three times in $B_1,B_2,\dots,B_{2^t}$. Let $k$ be the smallest positive integer such that $B_i=B_{i+k}$ for all $i$, where the index is understood in the sense $\pmod{2^t}$. It is easy to see that $k\,|\,2^t$, so we write $k=2^d$ for some $0\leq d\leq t$. Since $\Tr(AB)=\Tr(BA)$, we have
    \[\Tr(B_1B_2\dotsm B_{2^t})=\Tr(B_2B_3\dotsm B_{2^t}B_1)=\dots=\Tr(B_kB_{k+1}\dotsm B_{k-2}B_{k-1}).\]
    This implies that there are $k$ terms in the summation of \eqref{eq11} whose trace values are equal. By our assumption, there exists at least one number $1\leq i\leq k$ such that $B_i=A_2$, so among the terms $B_1,B_2,\dots,B_{2^t}$, $A_2$ occurs at least  $\max\{2^t/k,3\}$ times. By collecting like terms, we see that the coefficient of $\Tr(B_1B_2\dotsm B_{2^t})$ is divided by $k2^{\max\{2^t/k,3\}}=2^{d+\max\{2^{t-d},3\}}$. A direct analysis shows that $d+\max\{2^{t-d},3\}\geq t+2$ for all $d=0,1,\dots,t$. In this case, we see that the contribution of \( \Tr(B_1B_2\dotsm B_{2^t}) \) is $0$ by modulo $2^{t+2}$.

    Secondly, we assume that $A_2$ occurs exactly twice in $B_1,B_2,\dots,B_{2^t}$. Then $4\Tr(B_1B_2\dotsm B_{2^t})$ must equal to $4\Tr(A_1^iA_2A_1^jA_2)$ for some $i,j$ with $i+j=2^t-2$. If $i\neq j$, then
    \begin{align*} &\Tr(A_1^iA_2A_1^jA_2)=\Tr(A_1^{i-1}A_2A_1^jA_2A_1)=\dots=\Tr(A_2A_1^jA_2A_1^i)\\ =&\Tr(A_1^jA_2A_1^iA_2)=\Tr(A_1^{j-1}A_2A_1^iA_2A_1)=\dots=\Tr(A_2A_1^iA_2A_1^j),
    \end{align*} which implies that the term \( 4\Tr(A_1^i A_2 A_1^j A_2) \) appears \( 2^t \) times. Consequently, its contribution modulo \( 2^{t+2} \) is 0. If \( i = j = 2^{t-1} - 1 \), then, as in the argument above, this term appears \( 2^{t-1} \) times. Since \( A_2 A_1^{2^{t-1}-1} A_2 \) is symmetric and the diagonal entries of \( A_1^{2^{t-1}-1} \) are even, Lemma \ref{lem:trabeven} implies that \( \Tr(A_1^i A_2 A_1^j A_2) \) is even. Therefore, the contribution of \( \Tr(B_1B_2\dotsm B_{2^t}) \) modulo \( 2^{t+2} \) is always 0 as well.

Finally, we assume that \( A_2 \) appears exactly once among \( B_1, B_2, \dots, B_{2^t} \). In this case, \( 2\Tr(B_1 B_2 \dotsm B_{2^t}) \) is equal to \( 2\Tr(A_2 A_1^{2^t-1}) \). Similarly, we find that the term \( 2\Tr(A_2 A_1^{2^t-1}) \) appears \( 2^t \) times. By Lemma \ref{lem:2k1}, \( A_1^{2^t-1} \) is symmetric with all diagonal entries even, so Lemma \ref{lem:trabeven} implies that \( \Tr(A_2 A_1^{2^{t}-1}) \) is even. Therefore, the contribution of \( \Tr(B_1 B_2 \dotsm B_{2^t}) \) modulo \( 2^{t+2} \) is 0.

\

\noindent
\textbf{Case 2.} $t=2$.

\

By direct computation, we have
    \begin{align*}
    &\Tr(A_1+2A_2)^4-\Tr(A_1^4)\\
    =&8\Tr(A_1^3A_2)+16\Tr(A_1^2A_2^2)+8\Tr(A_1A_2A_1A_2)+32\Tr(A_1A_2^3)+16\Tr(A_2^4).
    \end{align*}
    Applying Lemma \ref{lem:trabeven}, we see that $\Tr(A_1^3A_2),\Tr(A_1A_2A_1A_2)$ are even. This completes the proof.
\end{proof}

\section{Proof of Theorem~\ref{equiv1}}\label{sec3}

In this section, we provide the proof of Theorem \ref{equiv1} by establishing a stronger result as follows.

\begin{thm}\label{thm:tracemod4}
    Let $A$ be the adjacency matrix of a graph $G$, $m=2^t(2k+1)$ be an integer with $k\in \mbn$. Then we have
    \[e^{\rm T}A^{2m} e - e^{\rm T}A^m e\equiv \frac{1}{2^{t+1}}\Tr\left(A^{4m}-A^{2m}\right)\pmod{4}.\]
\end{thm}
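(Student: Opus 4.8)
The plan is to read both sides as counts of walks and to funnel them into a single common quantity: the number of converse-invariant \emph{primitive} (aperiodic) closed walks. Write $m=2^t(2k+1)$, so $2m=2^{t+1}(2k+1)$ and $4m=2^{t+2}(2k+1)$. For an even length $L$ set $S(L)=\{\mbfc\in\mathcal{C}(L):\mbfc^{\conv}=\mbfc\}$, and let $S^{\flat}(L)\subseteq S(L)$ consist of those $\mbfc$ that are moreover primitive, i.e.\ have translation orbit $\ol{\mbfc}$ of full size $L$. The whole argument will rest on showing that both sides reduce, modulo $4$, to $\sum_{e\mid(2k+1)}|S^{\flat}(2^{t+2}e)|$.

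First I would handle the left-hand side. By Lemma~\ref{lem:wn2}, $e^{\rm T}A^{2m}e=|\mathcal{W}(2m)|=|S(4m)|$ and $e^{\rm T}A^me=|\mathcal{W}(m)|=|S(2m)|$, so the LHS equals $|S(4m)|-|S(2m)|$. Next I would establish the divisor decomposition $|S(L)|=\sum_{d\mid L}|S^{\flat}(d)|$: a converse-fixed closed walk of length $L$ is obtained by repeating a \emph{unique} primitive block of some length $d\mid L$, and the key point is that the reflection $j\mapsto 2-j$ defining $\mbfc^{\conv}=\mbfc$ descends consistently modulo $d$, so the base block is itself converse-fixed, and conversely repeating any converse-fixed primitive block stays converse-fixed. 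Subtracting the two decompositions, only the divisors of $4m$ of $2$-adic valuation exactly $t+2$ survive, giving the exact identity
\[
e^{\rm T}A^{2m}e-e^{\rm T}A^me=\sum_{e\mid(2k+1)}\bigl|S^{\flat}(2^{t+2}e)\bigr|.
\]

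Then I would attack the right-hand side. Writing $\Tr(A^L)=|\mathcal{C}(L)|=\sum_{d\mid L}P(d)$ with $P(d)$ the number of primitive closed walks of length $d$, the same cancellation yields $\Tr(A^{4m}-A^{2m})=\sum_{e\mid(2k+1)}P(2^{t+2}e)$. Primitive closed walks of length $d$ split into translation orbits of full size $d$, so $P(d)=d\,N(d)$ with $N(d)$ the number of such orbits; for $d=2^{t+2}e$ this gives $\tfrac{1}{2^{t+1}}P(d)=2eN(d)$, an even number, so modulo $4$ only the parity of $N(2^{t+2}e)$ matters (the factor $e$ being odd). The converse operation is an involution on these orbits, hence $N(d)$ is congruent modulo $2$ to the number of self-converse orbits. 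Here Lemma~\ref{lem:akconv} applies, since $4\mid 2^{t+2}e$, and tells us each self-converse orbit carries \emph{exactly two} converse-fixed walks, i.e.\ exactly two elements of $S^{\flat}(d)$, while non-self-converse orbits carry none; thus the number of self-converse orbits is $\tfrac12|S^{\flat}(d)|$. Assembling these congruences gives
\[
\frac{1}{2^{t+1}}\Tr\bigl(A^{4m}-A^{2m}\bigr)\equiv\sum_{e\mid(2k+1)}\bigl|S^{\flat}(2^{t+2}e)\bigr|\pmod 4,
\]
which matches the LHS exactly, finishing the proof.

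The main obstacle is the right-hand bookkeeping of the two powers of two: one must correctly see that the orbit size $2^{t+2}e$ cancels $2^{t+1}$ down to the factor $2e$, and that Lemma~\ref{lem:akconv} supplies a clean $2$-to-$1$ correspondence between converse-fixed primitive walks and self-converse orbits, so that the surviving parity is $|S^{\flat}|$ exactly rather than off by a spurious factor of $2$. A secondary point, needed to legitimize both decompositions, is to check that the converse map preserves minimal period (so it genuinely permutes primitive orbits) and that converse-invariance passes between a primitive block and its repetitions; both follow from the compatibility of $j\mapsto 2-j$ with reduction modulo $d$. I note in passing an alternative reduction, likely supporting the passage to general integral matrices: setting $B=A^{2k+1}$ (which has even diagonal by Lemma~\ref{lem:2k1}) turns the claim into the pure power-of-two case $m=2^t$ for $B$, after which Lemmas~\ref{lem:a1a22} and~\ref{lem:trace2t} replace $B$ by its $0/1$ reduction; but the combinatorial route above treats all $m$ uniformly and is the one I would write up.
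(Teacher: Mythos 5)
Your proof is correct, and it takes a genuinely different route from the paper's. The paper only runs the combinatorial argument in the special case $m=2^t$ (where the only divisor of $4m$ not dividing $2m$ is $4m$ itself, so no divisor bookkeeping is needed), and then handles general $m=2^t(2k+1)$ by a matrix reduction: writing $A^{2k+1}=A_1+2A_2$ with $A_1$ an adjacency matrix via Lemma~\ref{lem:2k1}, and transporting both sides of the congruence to $A_1$ using Lemma~\ref{lem:a1a22} for the walk counts and the Euler-congruence-type Lemma~\ref{lem:trace2t} for the traces (plus a separate direct computation for $t=0$). You instead observe that the combinatorial core works uniformly in $m$: Lemma~\ref{lem:wn2} turns the left side into $|S(4m)|-|S(2m)|$, both sides admit the divisor decomposition over primitive blocks, and the surviving divisors $2^{t+2}e$ with $e\mid(2k+1)$ still have $2$-adic valuation exactly $t+2$, so the orbit size cancels $2^{t+1}$ down to the even factor $2e$ with $e$ odd, after which the involution/fixed-point parity argument and Lemma~\ref{lem:akconv} apply verbatim. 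Your checks of the two delicate points are the right ones: that conversion descends to the primitive block (compatibility of $j\mapsto 2-j$ with reduction mod $d$), and that Lemma~\ref{lem:akconv} gives exactly two converse-fixed walks per self-converse primitive orbit. What your route buys is a single uniform argument that dispenses with Lemmas~\ref{lem:a1a22} and~\ref{lem:trace2t} entirely; what the paper's route buys is that the matrix lemmas immediately extend the theorem to arbitrary symmetric integral matrices with even diagonal (as noted in the remark following the theorem), whereas your walk-counting argument as written is tied to $0/1$ adjacency matrices and would itself need that reduction to reach the more general statement.
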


\begin{proof}
    We first prove the result for $m=2^t$. Let $\mbfc\in \mathcal{C}(m)$. Let $d$ be the smallest positive integer such that $\mbfc^{+d}=\mbfc$, then $\mbfc,\mbfc^{+1},\dots,\mbfc^{+(d-1)}$ are all distinct, so the cardinality of $\bar{\mbfc}$ is $d$. Suppose $m=qd+r$ with $0\leq r<d$, then we have $\mbfc^{+r}=(\mbfc^{+qd})^{+r}=\mbfc^{+m}=\mbfc$. The minimality of $d$ forces that $r = 0$, i.e. $d\mid m$. Therefore, the closed walk $\mbfc$ is totally determined by $c_1,\ldots,c_d$. This implies that
    \[|\{\mbfc\in \mathcal{C}(m)\,:|\bar{\mbfc}|=d\}|=|\{\mbfc\in \mathcal{C}(d)\,:|\bar{\mbfc}|=d\}|,\]
    which gives
    \begin{align*}
        \Tr(A^m)=|\mathcal{C}(m)|=\sum_{d\,|\, m}|\{\mbfc\in \mathcal{C}(m)\,:|\bar{\mbfc}|=d\}=\sum_{d\,|\, m}|\{\mbfc\in \mathcal{C}(d)\,:|\bar{\mbfc}|=d\}.
    \end{align*}
    Since $m$ is a power of $2$, we get
    \[\frac{1}{2m}\Tr\left(A^{4m}-A^{2m}\right)=\frac{1}{2m}|\{\mbfc\in \mathcal{C}(4m)\,:|\bar{\mbfc}|=4m\}|.\]

    Now for each $\mbfc\in \mathcal{C}(4m)$ with $|\bar{\mbfc}|=4m$, if $\bar{\mbfc}\neq \ol{\mbfc^{\conv}}$, then we show that $\bar{\mbfc}\cap \ol{\mbfc^{\conv}}=\emptyset$. Otherwise, if $\mbfc^{+k}\in \bar{\mbfc}\cap \ol{\mbfc^{\conv}}$ for some $k\in \mbz$, then $\mbfc^{+k}=(\mbfc^{\conv})^{+\ell}$ for some $\ell\in \mbz$. For any $d\in \mbz$, we have
    \[\mbfc^{+d}=(\mbfc^{+k})^{+(d-k)}=(\mbfc^{\conv})^{+(\ell+d-k)}\in \ol{\mbfc^{\conv}}.\]
    This implies that $\bar{\mbfc}\subseteq \ol{\mbfc^{\conv}}$. Similarly, we also have $ \ol{\mbfc^{\conv}}\subseteq \bar{\mbfc}$, which contradicts to our assumption $\bar{\mbfc}\neq\ol{\mbfc^{\conv}}$. So in this case, $\bar{\mbfc}$ and $\ol{\mbfc^{\conv}}$ are both contained in the set $\{\mbfc\in \mathcal{C}(4m)\,:|\bar{\mbfc}|=4m\}$ and $|\bar{\mbfc}\cup\ol{\mbfc^{\conv}}|=8m$. After dividing by $2m$, they have no contribution modulo $4$. So we get
    \[\frac{1}{2m}\Tr\left(A^{4m}-A^{2m}\right)\equiv \frac{1}{2m}|\{\mbfc\in \mathcal{C}(4m)\,:|\bar{\mbfc}|=4m,\bar{\mbfc}=\ol{\mbfc^{\conv}}\}|\pmod{4}.\]
    By Lemma \ref{lem:akconv}, there are exactly two representatives $\mbfb\in \bar{\mbfc}$ such that $\mbfb=\mbfb^{\conv}$, so we get
    \[\frac{1}{2m}\Tr\left(A^{4m}-A^{2m}\right)\equiv |\{\mbfc\in \mathcal{C}(4m)\,:|\bar{\mbfc}|=4m,\mbfc=\mbfc^{\conv}\}|\pmod{4}.\]
    Since $|\bar{\mbfc}|=4m$, we know $\mbfc^{+2m}\neq \mbfc$. Together with Lemma \ref{lem:wn2}, we get
    \begin{align*}
        &|\{\mbfc\in \mathcal{C}(4m)\,:|\bar{\mbfc}|=4m,\mbfc=\mbfc^{\conv}\}|\\
        =&|\{\mbfc\in \mathcal{C}(4m)\,:\mbfc=\mbfc^{\conv}\}|-|\{\mbfc\in \mathcal{C}(4m)\,:|\bar{\mbfc}|<4m,\mbfc=\mbfc^{\conv}\}|\\
        =&|\{\mbfc\in \mathcal{C}(4m)\,:\mbfc=\mbfc^{\conv}\}|-|\{\mbfc\in \mathcal{C}(4m)\,:\mbfc^{+2m}=\mbfc=\mbfc^{\conv}\}|\\
        =&|\mathcal{W}(2m)|-|\mathcal{W}(m)|= e ^{\rm T}A^{2m} e  - e ^{\rm T}A^m e .
    \end{align*}
This gives the desired result for $m=2^t$.

Now we consider the case $m=2^t(2k+1)$. By Lemma \ref{lem:2k1}, we can express $A^{2k+1}$ as the sum of two matrices, $A_1$ and $2A_2$, where  $A_1$ is the adjacency matrix of a specific simple graph, and $A_2$ is an integral matrix. When $t\geq 1$, by Lemma \ref{lem:a1a22}, we have
\[ e ^{\rm T}A^{2m} e - e ^{\rm T}A^m e \equiv e ^{\rm T}A_1^{2^{t+1}} e - e ^{\rm T}A_1^{2^t} e  \pmod{4}.\]
On the other hand, by Lemma \ref{lem:trace2t}, we have
\[\frac{1}{2^{t+1}}\Tr(A_1+2A_2)^{2^{t+2}}-\frac{1}{2^{t+1}}\Tr(A_1+2A_2)^{2^{t+1}}\equiv \frac{1}{2^{t+1}}\Tr (A_1^{2^{t+2}})-\frac{1}{2^{t+1}}\Tr (A_1^{2^{t+1}})\pmod{4}.\]
Hence the result follows immediately from the case $m=2^t$.

Finally, we consider the case $t=0$. By Lemma \ref{lem:a1a22}, we have
\begin{equation}\label{eq:eta2}
     e ^{\rm T}A^{2m} e - e ^{\rm T}A^m e \equiv e ^{\rm T}A_1^{2} e - e ^{\rm T}(A_1+2A_2) e  \equiv  e ^{\rm T}A_1^{2} e - e ^{\rm T}A_1 e -2\Tr (A_2)\pmod{4},
\end{equation}
where the last congruence follows from $ e ^{\rm T}A_2 e  \equiv \Tr (A_2)\pmod{2}$ since $A_2$ is symmetric. On the other hand, we have
\begin{equation}\label{eq:trt1}
\begin{split}
    &\frac{1}{2}\Tr(A_1+2A_2)^4-\frac{1}{2}\Tr(A_1+2A_2)^2\\
    \equiv &\frac{1}{2}\Tr (A_1^4)-\frac{1}{2}\Tr (A_1^2)+2\Tr(A_1A_2)-2\Tr (A_2^2)\pmod{4}.
\end{split}
\end{equation}
By Lemma \ref{lem:trabeven}, we have $\Tr(A_1A_2)\equiv 0\pmod{2}$. Since $A_2$ is symmetric, we have $\Tr(A_2^2)\equiv \Tr(A_2)\pmod{2}$. Combining the equations \eqref{eq:eta2} and \eqref{eq:trt1}, we get the desired result.
\end{proof}

\begin{rmk}
    It is not difficult to see that Theorem \ref{thm:tracemod4} holds for symmetric integral matrices with even diagonal entries.
\end{rmk}

Additionally, we recall the well-known result that the similarity of matrices is determined by the trace of its powers.

\begin{lem}\label{lem:trace}
$\phi(G;x)=\phi(H;x)$ if and only if $\Tr(A(G)^m)=\Tr(A(H)^m)$ for any integer $m\geq 0$.
\end{lem}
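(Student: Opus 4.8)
The plan is to prove this classical equivalence by interpreting $\Tr(A^m)$ as the $m$-th power sum of the eigenvalues and then invoking Newton's identities to pass between power sums and the coefficients of the characteristic polynomial. Write $\lambda_1,\dots,\lambda_n$ for the eigenvalues of $A(G)$ and $\mu_1,\dots,\mu_{n'}$ for those of $A(H)$, each listed with multiplicity. Since adjacency matrices are real symmetric, they are diagonalizable, so $\Tr(A(G)^m)=\sum_i\lambda_i^m$ and $\Tr(A(H)^m)=\sum_j\mu_j^m$ for every $m\geq 0$. The forward implication is then immediate: if $\phi(G;x)=\phi(H;x)$, the two matrices share exactly the same multiset of eigenvalues, whence all power sums—that is, all traces—coincide.

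For the reverse implication, I would first note that the case $m=0$ gives $n=\Tr(A(G)^0)=\Tr(A(H)^0)=n'$, so the two characteristic polynomials have the same degree $n$; this is precisely why the hypothesis is stated for $m\geq 0$ rather than $m\geq 1$. Set $p_m:=\sum_i\lambda_i^m=\sum_j\mu_j^m$, which are equal by hypothesis for $m=1,\dots,n$. Newton's identities express, over $\mathbb{Q}$, each elementary symmetric function $e_k$ of a collection of $n$ numbers through the recursion $k\,e_k=\sum_{i=1}^{k}(-1)^{i-1}e_{k-i}\,p_i$, which is solvable for $e_k$ precisely because the leading coefficient $k$ is invertible in $\mathbb{Q}$. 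Applying this to both eigenvalue multisets and inducting on $k$ shows $e_k(\lambda)=e_k(\mu)$ for $k=1,\dots,n$. Since $\phi(G;x)=x^n-e_1(\lambda)x^{n-1}+\dots+(-1)^n e_n(\lambda)$, and likewise for $H$, we conclude $\phi(G;x)=\phi(H;x)$.

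There is no serious obstacle here; the only point requiring genuine care is that Newton's identities are applied over $\mathbb{Q}$ (division by $k$), which is legitimate because the eigenvalues are complex numbers rather than elements of a ring of positive characteristic. An equivalent route avoiding the explicit recursion is to observe that the generating identity $\sum_{m\geq 1}p_m x^m = -x\,(\det(I-xA))'/\det(I-xA)$ shows that the full sequence of traces determines $\det(I-xA)=\prod_i(1-\lambda_i x)$, and hence the reversed—and therefore the original—characteristic polynomial. Either formulation completes the proof, and I expect the bookkeeping to be entirely routine once the power-sum interpretation of the trace is in place.
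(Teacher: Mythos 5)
Your proof is correct, but the backward direction takes a genuinely different route from the paper's. You reconstruct the coefficients of $\phi$ directly from the power sums via Newton's identities $k\,e_k=\sum_{i=1}^{k}(-1)^{i-1}e_{k-i}\,p_i$, inducting on $k$ and dividing by $k$ in $\mathbb{Q}$; the paper instead argues by contradiction that the two eigenvalue \emph{multisets} must coincide: it collects like terms into a putative nontrivial relation $n_1\nu_1^m+\dots+n_t\nu_t^m=0$ holding for all $m\geq 0$ with $\nu_1,\dots,\nu_t$ distinct, and kills it with the nonvanishing of the Vandermonde determinant. Your approach is more constructive (it exhibits the coefficients of $\phi$ as explicit polynomial expressions in the traces, which is arguably closer in spirit to the arithmetic flavor of the rest of the paper), at the cost of requiring division by $k\leq n$, so it needs characteristic zero or characteristic exceeding $n$; the Vandermonde argument avoids any division and only needs the $\nu_i$ to be distinct, though it recovers the spectrum rather than the coefficients. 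You are also right to flag the role of $m=0$ in pinning down the common degree $n$ --- the paper implicitly assumes both graphs have $n$ vertices, so this is a point where your write-up is slightly more careful. Either argument is complete and standard; there is no gap.
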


\begin{proof}
    Suppose $\phi(G;x) = \phi(H;x)$. Then $A(G)$ and $A(H)$ have the same eigenvalues, denoted by $\lambda_1, \dots, \lambda_n$. Therefore,
    \[
    \Tr(A(G)^m) = \lambda_1^m + \lambda_2^m + \dots + \lambda_n^m = \Tr(A(H)^m).
    \] Conversely, assume that $\Tr(A(G)^m) = \Tr(A(H)^m)$ holds for all integers $m \geq 0$. Let $\lambda_1, \dots, \lambda_r$ denote the distinct eigenvalues of $A(G)$ with multiplicities $k_1, \dots, k_r$, and let $\mu_1, \dots, \mu_s$ be the distinct eigenvalues of $A(H)$ with multiplicities $\ell_1, \dots, \ell_s$. Then, for any \( m \geq 0 \), we have
    \[
    k_1 \lambda_1^m + \dots + k_r \lambda_r^m = \ell_1 \mu_1^m + \dots + \ell_s \mu_s^m.
    \] Now it suffices to show that, under a suitable ordering, $\lambda_i = \mu_i$ and $k_i = \ell_i$ for all \( i \). If it is not the case, then by collecting like terms, we may assume that there exist $n_1,\dots,n_t \neq 0$ with $t \geq 1$ such that
    \[
    n_1 \nu_1^m + \dots + n_t \nu_t^m = 0.
    \]
    holds for all \( m \geq 0 \), where $\nu_1, \dots, \nu_t$ are the distinct values among $\lambda_i$ and $\mu_j$. However, since the Vandermonde determinant of the distinct values \( \nu_1, \dots, \nu_t \) is non-zero, this implies that \( n_1 = \dots = n_t = 0 \), which is a contradiction.
\end{proof}

Moreover, for a general positive integer \( m \), to obtain an explicit formula for \( e^{\rm T}A(G)^m e \) modulo 4 in terms of \( \Tr(A(G)^k) \), we need to prove the following lemma.

\begin{lem}\label{lem:oddm}
Let $A$ be the adjacency matrix of a graph $G$ and $m$ be an odd integer. Then we have
\[e^{\rm T}A^me\equiv \Tr(A^{2m}+A^m)\pmod{4}.\]
\end{lem}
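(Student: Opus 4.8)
The plan is to interpret everything in terms of closed walks and to count the converse-fixed ones via translation orbits, exactly in the spirit of the proof of Theorem~\ref{thm:tracemod4}. First I would rewrite the left-hand side combinatorially: applying Lemma~\ref{lem:wn2} to the even length $2m$ gives
\[
e^{\rm T}A^m e=|\mathcal{W}(m)|=|\{\mbfc\in\mathcal{C}(2m):\mbfc^{\conv}=\mbfc\}|,
\]
so it suffices to count the converse-fixed elements of $\mathcal{C}(2m)$ modulo $4$ and compare with $\Tr(A^{2m})+\Tr(A^m)=|\mathcal{C}(2m)|+|\mathcal{C}(m)|$.

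Next I would decompose $\mathcal{C}(2m)$ into translation orbits $\bar{\mbfc}$, whose sizes $d$ divide $2m$, and use that converse sends $\bar{\mbfc}$ to $\ol{\mbfc^{\conv}}$. As in Theorem~\ref{thm:tracemod4}, an orbit is either converse-stable ($\bar{\mbfc}=\ol{\mbfc^{\conv}}$) or disjoint from its converse image, in which case those two orbits together contribute no converse-fixed walks. Within a converse-stable orbit, writing $\mbfc^{\conv}=\mbfc^{+i}$ and indexing the orbit by $\mbz/d$, the converse acts as $j\mapsto i-j$, so the converse-fixed walks correspond to the solutions of $2j\equiv i\pmod d$. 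Here the absence of loops is decisive: repeating the middle-vertex argument of Lemmas~\ref{lem:akconv} and~\ref{lem:wn2}, an odd-size orbit can never be converse-stable, while in an even-size converse-stable orbit the shift $i$ must be even, giving exactly two converse-fixed walks. Hence the left-hand side equals $2B$, where $B$ is the number of converse-stable orbits, all of which have even size.

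The third step is to read off $\Tr(A^{2m})$ and $\Tr(A^m)$ from the same orbit decomposition. The $m$-periodic closed walks of length $2m$ (those whose orbit size divides $m$) biject with $\mathcal{C}(m)$ by restriction to the first $m$ steps, so the orbits of odd size contribute exactly $\Tr(A^m)$ to $|\mathcal{C}(2m)|$, while the even-size orbits contribute the remainder. The arithmetic heart of the argument is the observation that, since $m$ is odd, every even divisor $d$ of $2m$ satisfies $d\equiv 2\pmod 4$; consequently each converse-stable orbit has size $\equiv 2\pmod4$, and each converse-unstable pair of even-size orbits contributes $2d\equiv 0\pmod4$. This yields
\[
\Tr(A^{2m})-\Tr(A^m)\equiv\sum_{\text{converse-stable}}d\equiv 2B\pmod4 .
\]
Finally, since $m$ is odd, Lemma~\ref{lem:2k1} shows the diagonal of $A^m$ is even, so $\Tr(A^m)$ is even and $2\Tr(A^m)\equiv 0\pmod4$; adding $2\Tr(A^m)$ to the displayed congruence gives $\Tr(A^{2m})+\Tr(A^m)\equiv 2B=e^{\rm T}A^m e\pmod4$, as desired.

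I expect the main obstacle to be the bookkeeping in the second step—correctly showing that a converse-stable orbit of even size contributes exactly two fixed walks and that odd-size orbits are never converse-stable—together with making the divisor observation ($d\equiv 2\pmod4$) do the work of collapsing all the even-size contributions into $2B$ modulo $4$.
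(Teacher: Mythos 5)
Your argument is correct, but it is not the route the paper takes for this lemma. The paper's proof is algebraic and very short: it verifies the case $m=1$ directly (using $\Tr(A)=0$ and $\Tr(A^2)=\sum_{i,j}a_{ij}^2=e^{\rm T}Ae$), and for odd $m>1$ writes $A^m=A_1+2A_2$ with $A_1$ an adjacency matrix (possible by Lemma~\ref{lem:2k1}), then reduces everything to the $m=1$ case via the congruences $\Tr(A^{2m})\equiv\Tr(A_1^2)\pmod 4$ and $e^{\rm T}A_2e\equiv\Tr(A_2)\pmod 2$. You instead run the orbit-counting machinery of Theorem~\ref{thm:tracemod4} directly on $\mathcal{C}(2m)$: identifying $e^{\rm T}A^me$ with the converse-fixed walks of length $2m$ via Lemma~\ref{lem:wn2}, showing (by the middle-vertex/no-loops argument) that odd-size translation orbits are never converse-stable while even-size stable orbits contain exactly two fixed walks, and exploiting that every even divisor of $2m$ is $\equiv 2\pmod 4$ when $m$ is odd. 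I checked the key steps — the action $j\mapsto i-j$ on a stable orbit of size $d$, the parity obstruction forcing $i$ even, the bijection between $m$-periodic walks in $\mathcal{C}(2m)$ and $\mathcal{C}(m)$, and the final bookkeeping $\Tr(A^{2m})-\Tr(A^m)\equiv 2B\pmod 4$ together with $2\Tr(A^m)\equiv 0\pmod 4$ — and they all hold. What your approach buys is a self-contained combinatorial proof that bypasses the $A_1+2A_2$ decomposition and treats the odd case in the same walk-counting framework as the prime-power case, which is arguably more uniform; what the paper's approach buys is brevity, since Lemmas~\ref{lem:2k1} and~\ref{lem:trabeven} are already available and the whole reduction is three lines. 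One cosmetic point: for $m=1$ your appeal to Lemma~\ref{lem:2k1} for the evenness of $\Tr(A^m)$ is vacuous as stated (the lemma requires $k\geq 1$), but $\Tr(A)=0$ covers that case trivially.
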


\begin{proof}
We first assume that $m=1$. Then $\Tr(A)=0$ and
\[\Tr(A^2)=\sum_{i,j}a_{ij}^2=\sum_{i,j}a_{ij}=e^{\rm T}Ae.\]
For $m > 1$, by Lemma \ref{lem:2k1}, we can express $A^{2k+1}$ as the sum of two matrices, $A_1$ and $2A_2$, where  $A_1$ is the adjacency matrix of a specific simple graph, and $A_2$ is an integral matrix. By the discussion above, we have $e^{\rm T}A_1e\equiv \Tr(A_1^2+A_1)\pmod{4}$. On the other hand, we have
\[\Tr(A^{2m})=\Tr(A_1^2+2A_1A_2+2A_2A_1+4A_2^2)\equiv \Tr(A_1^2)\pmod{4}.\]
Finally, since $A_2$ is symmetric, we have $e^{\rm T}A_2e\equiv \Tr(A_2)\pmod{2}$. Combining all the equalities, we have
\[e^{\rm T}A^me=e^{\rm T}(A_1+2A_2)e\equiv \Tr(A_1^2+A_1)+2\Tr(A_2)\equiv \Tr(A^{2m}+A^m)\pmod{4}.\]
\end{proof}

Now we can obtain the following explicit formula for \( e^{\rm T}A(G)^m e \) modulo 4 in terms of \( \Tr(A(G)^k) \).

\begin{prop}\label{prop:tracemod4}
Let $A$ be the adjacency matrix of a simple graph $G$, $m=2^t(2k+1)$ be an integer with $k\in \mbn$. Then we have
\[e^{\rm T}A^me\equiv \frac{1}{2^{t+1}}\Tr\left(A^{2m}\right)+\sum_{\ell=0}^{t+1}\frac{1}{2^{\ell}}\Tr\left(A^{2^{\ell}(2k+1)}\right)\pmod{4}.\]
\end{prop}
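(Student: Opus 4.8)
The plan is to reduce the general case to the odd case handled by Lemma~\ref{lem:oddm}, by telescoping the recursive identity supplied by Theorem~\ref{thm:tracemod4}. Write $m_\ell := 2^\ell(2k+1)$ for $0\le \ell\le t+1$, so that $m=m_t$, and abbreviate $T_\ell := \Tr(A^{m_\ell})$. For each $\ell$ with $0\le \ell\le t-1$ I would apply Theorem~\ref{thm:tracemod4} with the exponent $m_\ell$ (whose dyadic valuation is $\ell$); since $2m_\ell=m_{\ell+1}$ and $4m_\ell=m_{\ell+2}$, this gives
\[
e^{\rm T}A^{m_{\ell+1}}e - e^{\rm T}A^{m_\ell}e \equiv \tfrac{1}{2^{\ell+1}}\bigl(T_{\ell+2}-T_{\ell+1}\bigr)\pmod 4 .
\]
Each right-hand side here is an integer—this is implicit in the statement of Theorem~\ref{thm:tracemod4}, and is visible directly from the orbit-counting in its proof, where the relevant trace difference is a sum of closed-walk counts each divisible by the corresponding orbit length—so these are congruences between genuine integers and may be summed.

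First I would add the displayed congruences for $\ell=0,\dots,t-1$. The left-hand side telescopes to $e^{\rm T}A^{m_t}e-e^{\rm T}A^{m_0}e$. On the right-hand side I would collect the coefficient of each $T_j$, noting that $T_j$ enters with weight $+2^{-(j-1)}$ (from the index $\ell=j-2$) and $-2^{-j}$ (from the index $\ell=j-1$) whenever these indices are in range. A short computation gives total weight $-\tfrac12$ on $T_1$, weight $\tfrac{1}{2^j}$ on $T_j$ for $2\le j\le t$, and weight $\tfrac{1}{2^t}$ on $T_{t+1}$, so that
\[
e^{\rm T}A^{m_t}e-e^{\rm T}A^{m_0}e \equiv -\tfrac12 T_1 + \sum_{j=2}^{t}\tfrac{1}{2^j}T_j + \tfrac{1}{2^t}T_{t+1}\pmod 4 .
\]

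Next I would treat the base term. Since $m_0=2k+1$ is odd and $2m_0=m_1$, Lemma~\ref{lem:oddm} gives $e^{\rm T}A^{m_0}e\equiv T_1+T_0\pmod 4$. Adding this to the telescoped identity and using $T_1-\tfrac12 T_1=\tfrac12 T_1$ yields
\[
e^{\rm T}A^{m}e \equiv T_0+\tfrac12 T_1+\sum_{j=2}^{t}\tfrac{1}{2^j}T_j+\tfrac{1}{2^t}T_{t+1} = \sum_{\ell=0}^{t}\tfrac{1}{2^\ell}T_\ell+\tfrac{1}{2^t}T_{t+1}\pmod 4 ,
\]
which, after writing $\tfrac{1}{2^t}T_{t+1}=\tfrac{2}{2^{t+1}}T_{t+1}$ and folding one copy into the sum (recall $m_{t+1}=2m$), is exactly the asserted expression $\frac{1}{2^{t+1}}\Tr(A^{2m})+\sum_{\ell=0}^{t+1}\frac{1}{2^\ell}\Tr(A^{2^\ell(2k+1)})$. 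The case $t=0$ is covered by the same argument, the telescoping sum then being empty and the claim reducing directly to Lemma~\ref{lem:oddm}.

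I expect the only delicate point to be bookkeeping rather than substance. I must keep each summand of the telescoped right-hand side integral so that the chain of congruences modulo $4$ is legitimate, and I must correctly account for the two ways each $T_j$ appears when collecting coefficients. The fractional quantities $\tfrac{1}{2^\ell}T_\ell$ occurring in intermediate steps need not individually be integers, but every congruence above is asserted between bona fide integers and the rearrangements are valid identities of rational numbers; the final right-hand side is then automatically an integer, since it equals the integer $e^{\rm T}A^m e$.
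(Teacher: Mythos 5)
Your proof is correct and is precisely the intended argument: the paper's own proof simply cites Theorem~\ref{thm:tracemod4} and Lemma~\ref{lem:oddm}, and your telescoping over $\ell=0,\dots,t-1$ followed by the odd base case is the straightforward way to combine them, with the coefficient bookkeeping checking out exactly. The only cosmetic quibble is the closing remark that the right-hand side "equals" $e^{\rm T}A^m e$ — it is merely congruent to it; its integrality instead follows from its being a finite sum of the integer quantities produced by each application of Theorem~\ref{thm:tracemod4} and Lemma~\ref{lem:oddm}.
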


\begin{proof}
It follows directly by Theorem \ref{thm:tracemod4} and Lemma \ref{lem:oddm}. \end{proof}

We are now in a position to deduce Theorem \ref{equiv1} from Proposition \ref{prop:tracemod4}.

\begin{proof}[Proof of Theorem \ref{equiv1}] Let $m=2^t(2k+1)$ with $k\in \mbn$. By Proposition \ref{prop:tracemod4}, we get
\[e^{\rm T}A(G)^me\equiv \frac{1}{2^{t+1}}\Tr\left(A(G)^{2m}\right)+\sum_{\ell=0}^{t+1}\frac{1}{2^{\ell}}\Tr\left(A(G)^{2^{\ell}(2k+1)}\right)\pmod{4},\]
and
\[e^{\rm T}A(H)^me\equiv \frac{1}{2^{t+1}}\Tr\left(A(H)^{2m}\right)+\sum_{\ell=0}^{t+1}\frac{1}{2^{\ell}}\Tr\left(A(H)^{2^{\ell}(2k+1)}\right)\pmod{4}.\]
Since $G$ and $H$ are cospectral, it follows from Lemma \ref{lem:trace} that for any $m\geq 0$, we have
$e^{\rm T}A(G)^me\equiv e^{\rm T}A(H)^me\pmod{4}$.
\end{proof}

\begin{rmk}
In some studies (see e.g. \cite{Merikoski1984, Marcus1962}), researchers have also examined the trace and the sum of elements of the powers of a matrix. However, most of these works focus on inequalities and asymptotic properties, as finding an exact relationship between them is nearly impossible. Therefore, our Proposition \ref{prop:tracemod4} offers a noteworthy result.
\end{rmk}

\section{Proofs of Theorems~\ref{parity11} and~\ref{equiv3}}\label{sec4}

In this section, we give the proofs of Theorem \ref{parity11} and Theorem~\ref{equiv3}.

\begin{proof}[Proof of Theorem \ref{parity11}]
By Lemma~\ref{even1}, both $\frac{e^{\rm T}A(G)^me}2$ and $\frac{e^{\rm T}A(H)^me}2$ are integers for any $m\geq 1$, so Theorem \ref{equiv1} implies that $\frac{e^{\rm T}A(G)^me}2\equiv \frac{e^{\rm T}A(H)^me}2\pmod{2}$ for any $m\geq 1$.

We distinguish the following two cases.

\

\noindent
\textbf{Case 1.} $n$ is even.

\

Note that $W(G)^{\rm T}W(G)=(e^{\rm T}A(G)^{i+j-2}e)_{n\times n}$ and $W(H)^{\rm T}W(H)=(e^{\rm T}A(H)^{i+j-2}e)_{n\times n}$.  Therefore, we have
\[\frac{W(G)^{\rm T}W(G)}2\equiv \frac{W(H)^{\rm T}W(H)}2\pmod{2}.\]
Note that
 \[\det\left(\frac{W(G)^{\rm T}W(G)}2\right)=(2^{-n/2}\det W(G))^2=\eta(G)^2,\]
and
\[\det\left(\frac{W(H)^{\rm T}W(H)}2\right)=(2^{-n/2}\det W(H))^2=\eta(H)^2.\]
If $\eta(G)$ is odd, then $2^{-n/2}\det W(G)$ is odd and hence $\det(\frac{W(H)^{\rm T}W(H)}2)$ is odd, which implies that $\eta(H)=2^{-n/2}|\det W(H)|$ is odd. Similarly, if $\eta(H)$ is odd, then $\eta(G)$ is also odd.

\

\noindent
\textbf{Case 2.} $n$ is odd.

\

Let $W_1(G)$ (resp. $W_1(H)$) be the matrix obtained from $W(G)$ (resp. $W(H)$) by
replacing the first column $e$ with $2e$, i.e.
\[W_1(G) := [2e, A(G)e,\ldots , A(G)^{n-1}e]~{\rm  and}~ W_1(H) := [2e, A(H)e, \ldots , A(H)^{n-1}e].\]
Notice that both $\frac{W_1(G)^{\rm T}W(G)}2$ and $\frac{W_1(H)^{\rm T}W(H)}2$ are integral matrices. It follows that
\[\frac{W_1(G)^{\rm T}W(G)}2\equiv \frac{W_1(H)^{\rm T}W(H)}2\pmod{2}.\]
It is easy to verify that
\[\det\left(\frac{W_1(G)^{\rm T}W(G)}2\right)=(2^{-(n-1)/2}\det W(G))^2=\eta(G)^2,\]
and
\[\det\left(\frac{W_1(H)^{\rm T}W(H)}2\right)=(2^{-(n-1)/2}\det W(H))^2=\eta(H)^2.\]
Then using the same arguments as Case 1, we can show that $\eta(H)$ is odd if and only if $\eta(G)$ is odd.

Combining Cases 1 and 2, we conclude that \( \eta(H) \) is odd if and only if \( \eta(G) \) is odd, that is, \( \eta(G) \) and \( \eta(H) \) have the same parity.
\end{proof}

Next, we present the proof of Theorem \ref{equiv3}, which gives the third invariant for cospectral graphs.

For integers $m \geq 0$, let $N_m(G)=e^{\rm T}A(G)^me$ be the number of walks of length $m$ in $G$. Then the generating series of $N_m(G)$ can be obtained from the characteristic polynomial of $G$ and that of the complement $\bar{G}$ of $G$.

\begin{lem}[\cite{CDS}, Theorem 1.11]\label{lem:hgt}
    Let $G$ be a graph with complement $\Bar{G}$, and let $H_G(t)=\sum_{m=0}^{\infty}N_m(G)t^m$ be the generating series of the numbers $N_m(G)\ (m=0,1,2,\cdots)$. Then \[H_G(t)=\frac{1}{t}\left((-1)^n\frac{\phi(\Bar{G};-\frac{t+1}{t})}{\phi(G;\frac{1}{t})}-1\right).\]
\end{lem}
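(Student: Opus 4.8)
The plan is to recognize the walk generating series as a resolvent of the adjacency matrix and then convert that resolvent into a ratio of characteristic polynomials by means of the complement relation together with the matrix determinant lemma. First I would observe that, as a formal power series in $t$,
\[
H_G(t)=\sum_{m=0}^{\infty} e^{\rm T}A^m e\, t^m = e^{\rm T}\Big(\sum_{m=0}^{\infty}(tA)^m\Big)e = e^{\rm T}(I-tA)^{-1}e,
\]
which is legitimate because $\det(I-tA)$ has constant term $1$, so $(I-tA)^{-1}$ exists in the ring of formal power series (equivalently, as a rational function in $t$). Since $I-tA=t\big(\tfrac1t I-A\big)$, setting $s=1/t$ gives $H_G(t)=\tfrac1t\,e^{\rm T}(sI-A)^{-1}e$. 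Hence it suffices to evaluate the scalar $e^{\rm T}(sI-A)^{-1}e$ and identify it with $(-1)^n\phi(\bar G;-(s+1))/\phi(G;s)-1$, noting that $-(s+1)=-\tfrac{t+1}{t}$ and $s=\tfrac1t$.

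The second step exploits the complement. Writing $J=ee^{\rm T}$ for the all-ones matrix, the adjacency matrix of $\bar G$ is $\bar A=J-I-A$, so
\[
\phi(\bar G;x)=\det\big(xI-\bar A\big)=\det\big((x+1)I+A-ee^{\rm T}\big).
\]
Putting $M=(x+1)I+A$ and applying the matrix determinant lemma $\det(M-ee^{\rm T})=\det(M)\big(1-e^{\rm T}M^{-1}e\big)$, read as an identity of rational functions in $x$ (so that invertibility of $M$ at special values is immaterial), yields $\phi(\bar G;x)=\det(M)\big(1-e^{\rm T}M^{-1}e\big)$. I would then identify $\det(M)=\det\big((x+1)I+A\big)=(-1)^n\det\big(-(x+1)I-A\big)=(-1)^n\phi\big(G;-(x+1)\big)$, using the definition $\phi(G;u)=\det(uI-A)$.

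The third step is a change of variable and sign bookkeeping to line everything up with the claim. Setting $x=-(s+1)$ gives $x+1=-s$ and $M=-sI+A=-(sI-A)$, so $\det(M)=(-1)^n\phi(G;s)$ and $e^{\rm T}M^{-1}e=-e^{\rm T}(sI-A)^{-1}e$. Substituting into the previous display gives
\[
\phi\big(\bar G;-(s+1)\big)=(-1)^n\phi(G;s)\big(1+e^{\rm T}(sI-A)^{-1}e\big).
\]
Solving for the resolvent scalar produces $e^{\rm T}(sI-A)^{-1}e=(-1)^n\phi(\bar G;-(s+1))/\phi(G;s)-1$, and multiplying by $\tfrac1t$ with $s=1/t$ recovers the stated formula.

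I expect the main obstacle to be bookkeeping rather than conceptual. The one genuinely careful point is justifying the passage between the formal series $\sum (tA)^m$ and the rational function $(I-tA)^{-1}$, which is clean precisely because $\det(I-tA)$ is a unit in the power-series ring. Beyond that, the work is to track the three sign-producing operations consistently: the substitution $t\mapsto 1/t$, the factor $(-1)^n$ arising from pulling $-1$ out of each of the $n$ rows of $M$, and the $-e$ appearing in the rank-one update. Since each of these is a single deterministic step, the identity assembles without any genuine difficulty once the resolvent reformulation in the first paragraph is in place.
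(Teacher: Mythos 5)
Your proposal is correct. Note that the paper does not prove this lemma at all --- it is quoted verbatim from Cvetkovi\'c--Doob--Sachs (Theorem 1.11), so there is no internal proof to compare against. Your derivation, writing $H_G(t)=e^{\rm T}(I-tA)^{-1}e$ as a resolvent and then using $\bar A=J-I-A$ together with the matrix determinant lemma $\det(M-ee^{\rm T})=\det(M)\bigl(1-e^{\rm T}M^{-1}e\bigr)$, is a clean and complete self-contained proof; the sign bookkeeping ($\det(M)=(-1)^n\phi(G;s)$ and $e^{\rm T}M^{-1}e=-e^{\rm T}(sI-A)^{-1}e$ after the substitution $x=-(s+1)$) checks out, and you correctly flag the only delicate points, namely interpreting $(I-tA)^{-1}$ in the formal power series ring (legitimate since $\det(I-tA)$ has constant term $1$) and reading the determinant lemma as an identity of rational functions.
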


Now we give an equivalent formulation of Theorem~\ref{equiv1}.

\begin{thm}\label{equiv}
Let $G$ and $H$ be two cospectral graphs. Then the following two statements are equivalent:
\begin{enumerate}
\item $e^{\rm T}A(G)^me\equiv e^{\rm T}A(H)^me\pmod{4}$, for any $m\geq 0$;
\item $\phi(\bar{G};x)\equiv \phi(\bar{H};x)\pmod{4}$.
\end{enumerate}
\end{thm}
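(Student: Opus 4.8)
The plan is to use the generating-function identity of Lemma~\ref{lem:hgt} to convert both statements into congruences of formal power series, and then to recognize that the map sending $\phi(\bar G;x)$ to its associated generating series is an invertible integral substitution. Since $G$ and $H$ are cospectral, $\phi(G;x)=\phi(H;x)$, so the denominators $\phi(G;\tfrac1t)=\phi(H;\tfrac1t)$ in Lemma~\ref{lem:hgt} agree. Rearranging that identity and clearing denominators by multiplying through by $t^n$, I would introduce the reciprocal polynomial $\psi(t):=t^n\phi(G;\tfrac1t)$, which is a polynomial in $t$ with constant term $1$, and set $\Phi_G(t):=t^n\phi(\bar G;-\tfrac{t+1}{t})$. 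The identity then becomes
\[
\Phi_G(t)=(-1)^n\bigl(1+tH_G(t)\bigr)\psi(t),
\]
and likewise $\Phi_H(t)=(-1)^n(1+tH_H(t))\psi(t)$ with the \emph{same} $\psi$.

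Next I would translate each of (1) and (2) into a congruence modulo $4$ for these objects. Statement (1) says $N_m(G)\equiv N_m(H)\pmod 4$ for all $m$, which is exactly $H_G(t)\equiv H_H(t)\pmod 4$ coefficientwise, equivalently $1+tH_G(t)\equiv 1+tH_H(t)\pmod 4$. Multiplying by the integral polynomial $(-1)^n\psi(t)$ preserves the congruence, giving $\Phi_G\equiv\Phi_H\pmod4$; conversely, since $\psi$ has constant term $1$ it is a unit in $(\mathbb{Z}/4\mathbb{Z})[[t]]$, so one can divide and recover $1+tH_G\equiv1+tH_H\pmod4$. Hence statement (1) is equivalent to $\Phi_G\equiv\Phi_H\pmod 4$.

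It then remains to show that $\Phi_G\equiv\Phi_H\pmod4$ is equivalent to (2), i.e.\ $\phi(\bar G;x)\equiv\phi(\bar H;x)\pmod4$. For this I would study the substitution $T$ defined on polynomials of degree at most $n$ by $(Tp)(t):=t^n\,p(-\tfrac{t+1}{t})$, so that $T(\phi(\bar G;\cdot))=\Phi_G$. This $T$ is $\mathbb{Z}$-linear with integer coefficients, and solving $u=-\tfrac{t+1}{t}$ for $t=\tfrac{-1}{u+1}$ shows its inverse is $(T^{-1}q)(x)=(-1)^n(x+1)^n\,q(\tfrac{-1}{x+1})$, again integral and again of degree at most $n$; thus $T$ is a bijection of the free $\mathbb{Z}$-module of polynomials of degree $\le n$ onto itself, and in particular induces a bijection on coefficient vectors modulo $4$. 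Therefore $\Phi_G\equiv\Phi_H\pmod4$ holds if and only if $\phi(\bar G;x)\equiv\phi(\bar H;x)\pmod4$, which closes the chain of equivalences.

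The step I expect to be the main obstacle is verifying that $T$ is genuinely invertible \emph{over} $\mathbb{Z}$ (not merely over $\mathbb{Q}$), since only integrality of the inverse guarantees that the mod-$4$ reductions correspond on both sides; this is where the explicit inverse formula and the check that it lands in degree $\le n$ must be carried out carefully, together with the bookkeeping that both $\Phi_G$ and the product $(1+tH_G)\psi$ are honest polynomials rather than mere power series, so that ``congruent mod $4$'' carries the same meaning throughout.
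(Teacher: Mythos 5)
Your proposal is correct and follows essentially the same route as the paper: both use the generating-series identity of Lemma~\ref{lem:hgt}, exploit that the cospectrality of $G$ and $H$ makes the denominators agree, and invert $t^n\phi(G;1/t)$ as a unit in $(\mathbb{Z}/4\mathbb{Z})[\![t]\!]$. The only difference is that where the paper simply says ``by changing the variables,'' you make that step explicit by exhibiting the integral inverse $(T^{-1}q)(x)=(-1)^n(x+1)^n q\bigl(\tfrac{-1}{x+1}\bigr)$ of the substitution $p\mapsto t^n p(-\tfrac{t+1}{t})$, which is a welcome (and correct) piece of added rigor.
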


\begin{proof}
By Lemma \ref{lem:hgt} we have
\begin{equation}\label{eq:genseries}
t^{n+1}\phi\left(G;\frac{1}{t}\right)\sum_{m\geq 0}N_m(G)t^m=(-1)^nt^n\phi\left(\bar{G};-\frac{t+1}{t}\right)-t^n\phi\left(G;\frac{1}{t}\right),
\end{equation}
where the equality holds as a formal power series in $t$. It is easy to see that the coefficients on both hands are integers.

If $N_m(G)\equiv N_m(H)\pmod{4}$ for all $m\geq 0$, then from $\phi(G;x)=\phi(H;x)$, we get
\[t^n\phi\left(\bar{G};-\frac{t+1}{t}\right)\equiv t^n\phi\left(\bar{H};-\frac{t+1}{t}\right)\pmod{4}.\]
By changing the variables, we see that $\phi(\bar{G};x)\equiv \phi(\bar{H};x)\pmod{4}$.

Conversely, if $\phi(\bar{G};x)\equiv \phi(\bar{H};x)\pmod{4}$, then from Equation \eqref{eq:genseries}, we see get
\begin{equation}\label{eq:nkgh}
t^n\phi(G;1/t)\sum_{m\geq 0}(N_m(G)-N_m(H))t^m\equiv 0\pmod{4}.
\end{equation}
Note that \( t^n \phi(G; 1/t) \) is a polynomial with a constant term of 1, which implies that \( t^n \phi(G; 1/t) \) is invertible in the formal power series ring \( \mathbb{Z}/4\mathbb{Z}[\![t]\!] \). Consequently, we have \( \sum_{m \geq 0} (N_m(G) - N_m(H)) t^m \equiv 0 \pmod{4} \), and therefore \( N_m(G) \equiv N_m(H) \pmod{4} \) for all \( m \geq 0 \).
\end{proof}

Now we are ready to present the following.

\begin{proof}[Proof of Theorem \ref{equiv3}]
This follows immediately from Theorem~\ref{equiv} and Theorem~\ref{equiv1}.
\end{proof}

Finally, we end this paper by giving an example for illustrations, see~\cite{WYZ}.}

\begin{exa}Let $G$ and $H$ be two graphs on $n=9$ vertices with adjacency matrices $A$ and $B$ given as follows, respectively.
\[A=
\begin{pmatrix}
 0 & 1 & 0 & 1 & 0 & 0 & 0 & 0 & 1 \\
 1 & 0 & 1 & 1 & 1 & 0 & 1 & 0 & 0 \\
 0 & 1 & 0 & 1 & 0 & 0 & 1 & 0 & 0 \\
 1 & 1 & 1 & 0 & 1 & 1 & 0 & 0 & 0 \\
 0 & 1 & 0 & 1 & 0 & 1 & 0 & 0 & 0 \\
 0 & 0 & 0 & 1 & 1 & 0 & 1 & 0 & 0 \\
 0 & 1 & 1 & 0 & 0 & 1 & 0 & 1 & 0 \\
 0 & 0 & 0 & 0 & 0 & 0 & 1 & 0 & 0 \\
 1 & 0 & 0 & 0 & 0 & 0 & 0 & 0 & 0 \\
\end{pmatrix},\quad B=\begin{pmatrix}
 0 & 1 & 1 & 0 & 1 & 1 & 0 & 0 & 1 \\
 1 & 0 & 1 & 1 & 0 & 0 & 0 & 0 & 0 \\
 1 & 1 & 0 & 1 & 1 & 0 & 1 & 0 & 0 \\
 0 & 1 & 1 & 0 & 1 & 0 & 0 & 0 & 0 \\
 1 & 0 & 1 & 1 & 0 & 1 & 0 & 0 & 0 \\
 1 & 0 & 0 & 0 & 1 & 0 & 1 & 0 & 0 \\
 0 & 0 & 1 & 0 & 0 & 1 & 0 & 1 & 0 \\
 0 & 0 & 0 & 0 & 0 & 0 & 1 & 0 & 0 \\
 1 & 0 & 0 & 0 & 0 & 0 & 0 & 0 & 0 \\
\end{pmatrix}
.\]

It can be computed
\[\phi(G;x)=\phi(H;x)=-2 - 4 x + 16 x^2 + 27 x^3 - 24 x^4 - 37 x^5 + 10 x^6 + 14 x^7 - x^9,\]
and thus $G$ and $H$ are cospectral. Also we have
\[\phi(\bar{G};x)=4 + 31 x + 66 x^2 - 3 x^3 - 108 x^4 - 43 x^5 + 38 x^6 + 22 x^7 - x^9,\]
and
\[\phi(\bar{H};x)=4 + 35 x + 74 x^2 - 7 x^3 - 112 x^4 - 43 x^5 + 38 x^6 + 22 x^7 - x^9.\] It is easy to see that $\phi(\bar{G};x)\equiv \phi(\bar{H};x)\pmod{4}$ holds. Moreover, we have
\[\eta(G)=2^{-\lfloor n/2 \rfloor}|\det W(G)|=1 \text{ and } \eta(H)=2^{-\lfloor n/2 \rfloor}|\det W(H)|=587.\]
Thus, $\eta(G)\equiv~\eta(H)\pmod{2}$. For $0\leq m\leq 9$, the sequences of $\{e^{\rm T}A(G)^me\}_{0\leq m\leq 9}$ and $\{e^{\rm T}A(H)^me\}_{0\leq m\leq 9}$ are
\[(9,28, 104, 380, 1412, 5210, 19308, 71376, 264260, 977480),\] and \[(9,28, 104, 380, 1408, 5198, 19248, 71176, 263452, 974620),\] respectively.
Thus $e^{\rm T}A(G)^me\equiv e^{\rm T}A(H)^me\pmod{4}$ for $0\leq m\leq 9$, and it also holds for any $m\geq 10$ by the Cayley-Hamilton Theorem.
\end{exa}

\section*{Acknowledgments}
The research of the third author is supported by National Key Research and Development Program of China 2023YFA1010203 and National Natural Science Foundation of China (Grant No.\,12371357), and the fourth author is supported by Fundamental Research Funds for the Central Universities (Grant No.\,531118010622), National
Natural Science Foundation of China (Grant No.\,1240011979) and Hunan Provincial Natural Science Foundation of China (Grant No.\,2024JJ6120).

\end{document}